\documentclass[12pt]{amsart}
\usepackage{dirtytalk}
\usepackage[english]{babel}
\usepackage[utf8x]{inputenc}
\usepackage[T1]{fontenc}
\usepackage{amsmath}
\usepackage{amstext}
\usepackage{amsfonts}
\usepackage{amssymb}
\usepackage{amsthm}
\usepackage[alphabetic, initials]{amsrefs}
\usepackage{mathtools}
\usepackage{dsfont}
\usepackage{color}
\usepackage{graphicx}
\usepackage[colorinlistoftodos]{todonotes}
\usepackage[colorlinks, linkcolor=red, citecolor=blue, urlcolor=blue, hypertexnames=true]{hyperref}
\usepackage{float}
\usepackage{theoremref}
\usepackage{enumitem}
\allowdisplaybreaks

\newtheorem{theorem}{Theorem}[section]
\newtheorem{lemma}[theorem]{Lemma}
\newtheorem{prop}[theorem]{Proposition}
\newtheorem{cor}[theorem]{Corollary}

\newtheorem*{cor*}{Corollary}
\newtheorem*{conjecture*}{Conjecture}
\newtheorem*{thm*}{Theorem}
\newtheorem*{lem*}{Lemma}
\newtheorem*{prop*}{Proposition}
\theoremstyle{definition}
\newtheorem{definition}[theorem]{Definition}
\newtheorem{defn}[theorem]{Definition}
\newtheorem{example}[theorem]{Example}
\newtheorem*{defn*}{Definition}
\theoremstyle{remark}
\newtheorem{remark}[theorem]{Remark}

\newcommand{\cA}{\mathcal{A}}   
\newcommand{\cM}{\mathcal{M}}   
\newcommand{\bC}{{\mathbb{C}}}
\newcommand{\bR}{{\mathbb{R}}}
\newcommand{\E}{{\mathbb{E}}}

\newcommand{\bN}{{\mathbb{N}}}
\newcommand{\bZ}{{\mathbb{Z}}}

\title[Relative ISR-property]{On Relative Invariant Subalgebra Rigidity Property}
\author[Amrutam]{Tattwamasi Amrutam}
\address{Institute of Mathematics of the Polish Academy of Sciences, Ul.~\'Sniadeckich 8, 00-656 Warszawa, Poland}
\email{tattwamasiamrutam@gmail.com}
\thanks{The author was partially supported by the Simons Foundation grant (award no. SFI-MPS-T-Institutes-00010825) and from State Treasury funds as part of a task commissioned by the Minister of Science and Higher Education under the project \say{Organization of the Simons Semesters at the Banach Center - New Energies in 2026-2028} (agreement no. MNiSW/2025/DAP/491).}
\date{\today}

\begin{document}
\begin{abstract}
A countable discrete group \(\Gamma\) is said to have the \emph{relative ISR-property} if for every non-trivial normal subgroup
\(N\trianglelefteq\Gamma\) and every von Neumann subalgebra \(\cM\subseteq L(\Gamma)\)
invariant under conjugation by \(N\), one has \(\cM=L(K)\) for some subgroup
\(K\le\Gamma\). Similarly, \(\Gamma\) has the \emph{relative $C^*$-ISR-property} if every 
\(N\)-invariant unital \(C^*\)-subalgebra \(\cA \subseteq C_r^*(\Gamma)\) is of the form \(C_r^*(K)\). 
We show that every torsion-free acylindrically hyperbolic group with trivial amenable
radical satisfies the relative ISR property. Moreover, we also show that all torsion-free hyperbolic groups have the relative $C^*$-ISR property. Furthermore, we establish an analogous relative ISR-property for irreducible lattices in higher-rank semisimple Lie groups, such as \(\mathrm{SL}_d(\mathbb{Z})\) (\(d \geq 3\)), with trivial center.
\end{abstract}
\maketitle

\section{Introduction}
The structural theory of von Neumann algebras associated with discrete groups has witnessed
a remarkable resurgence through its deep interplay with geometric group theory and Popa's deformation/rigidity paradigm. A central
theme in this area is the rigidity of subalgebras \(\cA\subseteq L(\Gamma)\) that remain
invariant under conjugation by $\Gamma$.

Motivated by the works of Chifan--Das~\cite{chifan2020rigidity} and
Alekseev--Brugger~\cite{alekseev2021rigidity} on \(\Gamma\)-invariant subalgebras for
negatively curved groups and higher-rank lattices, Kalantar--Panagopoulos~\cite{kalantar2023invariant}
proved that for irreducible lattices \(\Gamma\) in higher-rank semisimple Lie groups,
every \(\Gamma\)-invariant von Neumann subalgebra of \(L(\Gamma)\) arises as the group
von Neumann algebra of some normal subgroup of \(\Gamma\).

This result strongly motivated the author and Jiang~\cite{amrutam2023invariant} to initiate a
systematic study for countable groups. In our work, we introduced the \emph{invariant
subalgebra rigidity (ISR) property} (a group \(\Gamma\) is said to have the ISR property
if every \(\Gamma\)-invariant von Neumann subalgebra of \(L(\Gamma)\) is of the form
\(L(K)\) for some normal subgroup \(K\trianglelefteq\Gamma\)). The ISR property has proven extremely
fruitful and has been generalized and strengthened in many subsequent works, including
those of Chifan--Das--Sun~\cite{chifan2023invariant}, Dudko--Jiang~\cite{dudko2024character},
Jiang--Zhou~\cite{jiang2024example},
Amrutam--Dudko--Jiang--Skalski~\cite{amrutam2025invariant}. More recently, Jiang, along with Li and Liu~\cite{JL, jiang2026classification}, has taken a new direction, systematically studying invariant subalgebras in cases lacking the ISR property.

In recent years, the rigidity of invariant subalgebras in both the von Neumann algebraic and the reduced \(C^*\)-algebraic settings has emerged as a powerful tool that bridges operator algebras with geometric group theory.
The present paper addresses a natural yet subtler variant of this question: rigidity when
invariance is required only under a non-trivial normal subgroup
\(N\trianglelefteq\Gamma\) rather than under the whole group. Our goal is to develop a rigidity theory in two distinct but robust geometric settings: the large and geometrically flexible class of \emph{torsion-free hyperbolic groups}, and the rigid realm of irreducible lattices in higher-rank semisimple Lie groups, such as $\mathrm{SL}_d(\mathbb{Z})$ ($d \geq 3$). To motivate our results, we introduce the following relative version.

\begin{definition}
A group $\Gamma$ is said to have the \emph{relative ISR-property} if for every non-trivial
normal subgroup $N \trianglelefteq \Gamma$ and every von Neumann subalgebra
$\cM \subseteq L(\Gamma)$ invariant under conjugation by $N$, one has $\cM = L(K)$ for
some subgroup $K \leq \Gamma$. Moreover, we say that $\Gamma$ has \emph{relative $C^*$-ISR-property} if every $N$-invariant unital $C^*$-subalgebra $\mathcal{A}\le C_r^*(\Gamma)$ is of the form $C_r^*(K)$ for some subgroup $K\le \Gamma$.
\end{definition}

Our first main result shows that this property holds for the class of acylindrically hyperbolic groups.

\begin{theorem}\label{thm:relativeISR}
Let $\Gamma$ be a torsion-free acylindrically hyperbolic group with trivial amenable radical. Then
$\Gamma$ has the relative ISR-property.
\end{theorem}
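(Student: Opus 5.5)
The plan is to reduce the statement to the existing ISR machinery of \cite{amrutam2023invariant} and \cite{chifan2023invariant}, with the normal subgroup $N$ playing the role of the whole group. The argument has three steps: show $N$ is itself acylindrically hyperbolic, establish a Fourier-coefficient criterion, and conclude that $\cM$ is a group algebra.

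\textbf{Step 1: properties of $N$.} Fix a non-trivial $N\trianglelefteq\Gamma$ and an $N$-invariant von Neumann subalgebra $\cM\subseteq L(\Gamma)$. Since $\Gamma$ has trivial amenable radical, $N$ is not amenable. By Osin's theorem, an infinite normal subgroup of an acylindrically hyperbolic group is acylindrically hyperbolic; $N$ is infinite because $\Gamma$ is torsion-free. Moreover, $N$ contains loxodromic elements for the same acylindrical action of $\Gamma$ on a hyperbolic space $X$. Torsion-freeness gives two further facts:
\begin{itemize}
\item for each loxodromic $h\in N$, the elementary subgroup $E(h)$ is infinite cyclic;
\item for every $g\in\Gamma\setminus\{e\}$, the $N$-conjugacy class $\{ngn^{-1}: n\in N\}$ is infinite.
\end{itemize}
The second fact holds because a finite $N$-class would make $C_N(g)$ of finite index in $N$. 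Then $g$ commutes with finite-index subgroups of infinitely many independent loxodromics, which forces $g$ to be elliptic-central and hence trivial. In particular $L(\Gamma)\cap L(N)'=\bC$, and $N$ acts on $\Gamma$ by conjugation with all nontrivial orbits infinite.

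\textbf{Step 2: the Fourier-coefficient criterion.} The key step is to show the following: if $x=\sum_g x_g u_g\in\cM$ and $x_g\neq 0$, then $u_g\in\cM$. Following \cite{amrutam2023invariant}, I would use the conditional expectation $E_\cM$ and the averaging argument. Pick a loxodromic $h\in N$ that is ``generic'' relative to $g$, meaning that $g\notin E(h)$ unless $g$ is a power of $h$. Then consider the weak$^*$ limit of
\[
\frac1n\sum_{k=1}^n u_{h^k}\,x\,u_{h^{-k}}.
\]
This limit lies in $\cM$ by $N$-invariance, and it lies in $L(C_\Gamma(h))=L(\langle h_0\rangle)$ for a root $h_0$ of $h$. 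This isolates the Fourier coefficients of $x$ supported on the centralizer. To extract an arbitrary $g$, I would first multiply $x$ by suitable $\cM$-elements, or use $E_\cM(u_g)$ directly. Write $y=E_\cM(u_g)$. Then $y$ satisfies $u_n y u_n^*=E_\cM(u_{ngn^{-1}})$. The goal is to show $y\in\{0,u_g\}$, or more precisely that $\tau(y u_g^*)\in\{0,1\}$.

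I would use the mixing of conjugation by loxodromics together with the ping-pong and small-cancellation ``hyperbolically embedded'' estimates of Chifan--Das--Sun \cite{chifan2023invariant}. These give a $\|\cdot\|_2$ estimate showing that $\langle E_\cM(u_g),u_g\rangle$ is either $0$ or $1$, exploiting that $E_\cM$ is a projection and that the elements $u_{h^k g h^{-k}}$ become asymptotically orthogonal to everything except what is fixed.

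\textbf{Step 3: conclusion.} Once the criterion holds, set $K=\{g: u_g\in\cM\}$, which is a subgroup. Every $x\in\cM$ then has support in $K$, so $\cM=L(K)$.

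\textbf{Main obstacle.} The hard part is Step 2. Without full $\Gamma$-invariance, one cannot average over all of $\Gamma$, so both the mixing estimates and the choice of loxodromics must be made inside $N$. I expect the crucial input to be that $N$ contains loxodromics $h$ for which $\Gamma$-conjugates of $g$ are "transverse" to $E(h)$. I would get this from the WPD property of $\Gamma$ restricted to $N$, and I would handle the case $g\in E(h)=\langle h_0\rangle$ separately by choosing a second, independent loxodromic in $N$.
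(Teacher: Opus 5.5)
There is a genuine gap. Step~2 carries the whole theorem, yet it is asserted rather than proved, and the mechanism you suggest cannot produce the dichotomy by itself. Orthogonality or mixing of the $N$-conjugates of $g$, whether in $\ell^2\Gamma$ or in the GNS space of $\phi_\cM$, is a Bessel-type input. It can force $\phi_\cM(g)=0$ once you already know that $\phi_\cM$ vanishes on the cross terms $n_j^{-1}gn_j\,n_i^{-1}g^{-1}n_i$. It never yields $\langle \E_\cM(u_g),u_g\rangle\in\{0,1\}$. For instance, $\bZ^2\rtimes\mathrm{SL}_2(\bZ)$ is i.c.c.\ with amenable radical $\bZ^2$. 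The fixed-point algebra of $u_v\mapsto u_{-v}$ in $L(\bZ^2)$ is invariant under the whole group, yet $\phi_\cM(v)=1/2$ for every $v\neq 0$.

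Your Fourier criterion also cannot be reached for arbitrary $g$ by averaging over powers of elements of $N$. Such averaging only isolates coefficients on $E(h)=\langle h_0\rangle$, and every element of $E(h)$ has finite-order image in $\Gamma/N$, because $h_0^n=h\in N$. So if $gN$ has infinite order, no such averaging applied to $x$ ever sees the coefficient at $g$. An example is $\Gamma=\mathbb{F}_2=\langle a,b\rangle$, $N=\ker(\mathbb{F}_2\to\bZ)$ with $a\mapsto 1$, $b\mapsto 0$, and $g=a$. The same happens for elliptic $g$, which lies in no $E(h)$ with $h$ loxodromic. Your Step~3 needs the criterion for exactly these $g$.

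The paper closes this with three concrete ingredients that your plan only gestures at.

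\textbf{(i) Reduction to loxodromics of $N$.} For every $g\neq e$, Proposition~\ref{prop:w_ij_loxodromic} produces $n_i=h^{iM}\in N$ such that all cross terms $w_{i,j}$ are loxodromic elements of $N$. The GNS/Bessel argument then shows that if $\E_\cM(\lambda(n))=0$ for all loxodromic $n\in N$, then $\cM=\bC$. Hence a non-trivial $\cM$ has a loxodromic $n_0\in N$ in the Fourier support of one of its elements.

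\textbf{(ii) Explicit extraction for such $n_0=t^k$.} Your mean-ergodic averaging onto $L(\langle t\rangle)$ is a valid version of the first half. The second half is a ping-pong with a free partner $s\in N$, $\langle t,s\rangle\cong\langle t\rangle\ast\langle s\rangle$ (Lemma~\ref{lem:free_in_normal}). Successive averagings isolate $\lambda(n_0sn_0s^{-1})$, then $\lambda(n_0^2)$, and finally $\lambda(n_0)\in\cM$.

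\textbf{(iii) A commutant argument replacing any Fourier criterion for general $g\in\Gamma$.} Let $K_0=\{n\in N:\lambda(n)\in\cM\}$, a non-trivial normal subgroup of $N$, and set $H=K_0\cap g^{-1}K_0g$. Then $\lambda(g)^*\E_\cM(\lambda(g))$ commutes with $L(H)$. Trivial amenable radical gives $s$-normality of $K_0$, so $H$ is an infinite normal subgroup of $N$. By Lemma~\ref{lem:plumpness}, $L(H)'\cap L(\Gamma)=\bC$, which forces $\E_\cM(\lambda(g))\in\{0,\lambda(g)\}$.

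Note that the commutant you compute in Step~1, $L(N)'\cap L(\Gamma)$, is the wrong one for this purpose, since $L(N)$ need not lie in $\cM$. What is needed is the commutant of a subgroup whose group algebra already lies in $\cM$ and is carried into $\cM$ by $\mathrm{Ad}\,\lambda(g)$.
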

Alongside the von Neumann algebraic setting, we also establish the same rigidity for reduced group $C^*$-algebras.
\begin{theorem}\label{thm:CstarISR}
Let $\Gamma$ be a torsion-free hyperbolic group. Then $\Gamma$ has relative $C^*$-ISR-property.
\end{theorem}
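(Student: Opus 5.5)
The plan is to reduce the $C^*$-statement to a Fourier-coefficient statement: show that whenever $a\in\cA$ and $g\in\Gamma$ with $\tau(a\lambda_g^*)\neq 0$, the unitary $\lambda_g$ lies in $\cA$. Once this holds, set $K=\{g\in\Gamma:\lambda_g\in\cA\}$. This is a subgroup, since $\cA$ is a unital $C^*$-algebra closed under products and adjoints. We then get $C_r^*(K)\subseteq\cA$. For the reverse inclusion, every $a\in\cA$ has Fourier support contained in $K$. The canonical conditional expectation $E_K:C_r^*(\Gamma)\to C_r^*(K)$ is faithful, and an element supported in $K$ equals its image under $E_K$ (compare Fourier coefficients and use faithfulness of $\tau$). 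Hence $a=E_K(a)\in C_r^*(K)$.

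The key tool is an averaging argument over $N$. Since $\Gamma$ is torsion-free hyperbolic (non-elementary; the elementary case $\Gamma\in\{1,\bZ\}$ is handled separately), the non-trivial normal subgroup $N$ is itself non-elementary. In particular $N$ is infinite, and its limit set is all of $\partial\Gamma$. Moreover $\Gamma$ and $N$ are $C^*$-simple with the unique trace property, and the Powers averaging property holds relative to $N$. Concretely, for any finite $F\subseteq\Gamma\setminus\{e\}$ and $\varepsilon>0$ there are $s_1,\dots,s_n\in N$ with
\[
\Bigl\|\frac1n\sum_{i}\lambda_{s_i}x\lambda_{s_i}^*\Bigr\|<\varepsilon
\]
for all $x$ supported in $F$. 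This uses ping-pong with loxodromic elements of $N$, which exist because $N$ is non-elementary.

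The refined statement needed goes further than killing nontrivial coefficients. For $a\in\cA$ and a fixed $g$ with $\tau(a\lambda_g^*)\neq0$, we want to isolate $\lambda_g$ from $a$ using $N$-conjugation and multiplication inside $\cA$. This is done in three steps.

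\begin{enumerate}[label=(\arabic*)]
\item First treat $g=e$: averaging $a$ over $N$ in Powers' sense gives $\tau(a)1\in\cA$, which is trivial.
\item For $g\neq e$, consider the centralizer $C_\Gamma(g)$. Since $\Gamma$ is torsion-free hyperbolic, it is cyclic, say $\langle h\rangle$, and maximal cyclic. Pick a loxodromic $t\in N$ not commensurating $\langle h\rangle$. Elements $\lambda_{t^k}a\lambda_{t^{-k}}$ lie in $\cA$, and we form $a^*\cdot(\lambda_{t^k}a\lambda_{t^{-k}})$-type products. Their behaviour along the axis of $g$ versus the conjugated axes can be separated using the north–south dynamics of $t$ on $\partial\Gamma$. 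Approximate $a$ by a finitely supported element first, controlling errors in norm.
\item The aim is an element of $\cA$ whose Fourier expansion concentrates on the single coset direction $\{g\}$ modulo small norm. One then uses a relative Powers averaging, with conjugators $s\in N\cap C_\Gamma(g)$ or commuting approximations, to kill all coefficients other than $g$. This yields $\lambda_g\in\cA$ after normalizing by $\tau(a\lambda_g^*)$.
\end{enumerate}

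I expect step (3) to be the main obstacle, because $N$-conjugation does not fix $\lambda_g$ unless $g$ commutes with the conjugators, and in a hyperbolic group $N\cap C_\Gamma(g)$ may be trivial. So Powers averaging alone cannot isolate a single coefficient. My first attempt is to use \emph{products} instead of single conjugates. For $s\in N$, the element $a\,\lambda_s a^*\lambda_s^*$ lies in $\cA$. With $s$ a high power of a loxodromic in $N$ whose fixed points avoid those of $g$, the Fourier support of such products is controlled by hyperbolic geometry (small cancellation for long words). This should let one build, from a $C^*$-functional-calculus or spectral projection argument, elements whose $g$-coefficient dominates. Combining this with the norm estimates valid for hyperbolic groups (rapid decay) should make the approximation argument work in norm rather than only in $\ell^2$. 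It is this reliance on rapid decay that forces restricting to hyperbolic rather than acylindrically hyperbolic groups in the $C^*$-setting.
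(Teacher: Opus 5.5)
\textbf{There is a genuine gap.} Your reduction to the coefficient statement (if $\tau_0(a\lambda_g^*)\neq 0$ for some $a\in\cA$, then $\lambda_g\in\cA$) is correct. But it is equivalent to the theorem, so all the content sits in steps (2)--(3), and there the proposal gives no actual mechanism.

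\emph{The isolation step is missing even for $g\in N$.} Write $g=t^k$ with $t$ primitive. Conjugating by elements of $N\cap C_\Gamma(g)=N\cap\langle t\rangle$ can at best yield $\E_t(a)=\sum_m c_{t^m}\lambda_{t^m}\in\cA$; this is the paper's selective averaging, Proposition~\ref{prop:selective_averaging_N}. These conjugations fix $C_r^*(\langle t\rangle)$ pointwise, so further averaging over them cannot separate $\lambda_{t^k}$ from the other powers. Functional calculus inside $C_r^*(\langle t\rangle)\cong C(\mathbb{T})$ cannot do it either: if $\E_t(a)=\lambda_t+\lambda_t^*$, it generates only the ``even'' functions, which do not include $\lambda_t$. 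Spectral projections are also unavailable, since $C_r^*(\Gamma)$ has no non-trivial projections for torsion-free hyperbolic $\Gamma$. The same example shows the statement is false for $\Gamma=\bZ$, so that case cannot be ``handled separately''; non-elementarity must be assumed, as the paper tacitly does.

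The paper gets past this with a genuinely non-commutative ping-pong argument (Theorem~\ref{thm:N_invariant_position}):
\begin{itemize}
\item it picks $s\in N$ with $\langle t,s\rangle\cong\langle t\rangle\star\langle s\rangle$ (Lemma~\ref{lem:free_in_normal});
\item it multiplies $\E_t(a)$ by $\E_{sts^{-1}}(\lambda_s a\lambda_s^*)$ and averages again along the primitive root of $gsgs^{-1}\in N$, giving $\lambda_{gsgs^{-1}}\in\cA$;
\item it then assembles $\lambda_{g^2}$, $\lambda_{gsg^2s^{-1}}$ and finally $\lambda_g$.
\end{itemize}

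\emph{For $g\notin N$ the plan has nothing.} You correctly observe that $N\cap C_\Gamma(g)$ can be trivial, e.g.\ $N$ the kernel of a surjection $\mathbb{F}_2\to\bZ$ and $g$ a generator with non-zero image. After that, nothing in your plan produces $\lambda_g$. The paper never extracts such $\lambda_g$ directly; it uses your product idea differently.
\begin{itemize}
\item Take $\gamma\in\operatorname{Supp}(a)$, a finite truncation $F$ of the support, and $Z=\{s\gamma^{-1}:s\in F\setminus\{\gamma\}\}$. Choose $n_0\in N$ by relative i.c.c.\ so that $n_0Zn_0^{-1}\cap Z=\emptyset$.
\item Then the Fourier coefficient of $a^*\lambda_{n_0}a\lambda_{n_0}^*\in\cA$ at the commutator $\gamma^{-1}n_0\gamma n_0^{-1}\in N\setminus\{e\}$ is within $2\varepsilon\|a\|_2+\varepsilon^2$ of $|c_\gamma|^2$.
\item This needs only an $\ell^2$-truncation, so rapid decay plays no role. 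The real reason for restricting to hyperbolic groups is that Theorem~\ref{thm:N_invariant_position} is applied to such elements of $N$, and they must be loxodromic.
\item It shows that $\cA\cap C_r^*(N)=C_r^*(K)$ with $K\trianglelefteq N$ non-trivial whenever $\cA\neq\bC$.
\end{itemize}

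The part of $\cA$ outside $C_r^*(N)$ is then controlled structurally, not coefficient by coefficient:
\begin{itemize}
\item $\cA''=L(H)$ by Theorem~\ref{thm:relativeISR}, and $K=H\cap N$;
\item hence $C_r^*(K)\subseteq\cA\subseteq C_r^*(N_\Gamma(K))$;
\item this inclusion is $C^*$-irreducible, using $C^*$-simplicity of $K$ and the relative i.c.c.\ property of Lemma~\ref{lem:plumpness};
\item an intermediate-subalgebra theorem of B\'edos--Omland for such inclusions then identifies $\cA$ as some $C_r^*(K_0)$.
\end{itemize}
Without this step, or a real substitute for it, your argument does not reach the elements of $\Gamma\setminus N$.
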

Motivated by the work of Dudko--Jiang~\cite{dudko2024character}, we also explore the relative ISR-property in the vastly different geometric setting of higher-rank lattices. For irreducible lattices in higher-rank semisimple Lie groups, such as $\Gamma = \mathrm{SL}_d(\mathbb{Z})$ ($d \geq 3$), we establish a corresponding rigidity result for $N$-invariant von Neumann subalgebras.
\begin{theorem}\label{thm:latticeISR}
Let $\Gamma = \mathrm{SL}_d(\mathbb{Z})$ for $d \geq 3$ with $d$ odd. Then $\Gamma$ has the relative ISR-property. 
\end{theorem}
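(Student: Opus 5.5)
The plan is to follow the Dudko--Jiang strategy from \cite{dudko2024character}, which controls invariant subalgebras through characters, and run it relative to $N$. Let $\Gamma=\mathrm{SL}_d(\mathbb{Z})$ with $d\ge 3$ odd. Then the center is trivial, and by Margulis' normal subgroup theorem every non-trivial normal subgroup $N\trianglelefteq\Gamma$ has finite index. This is where the assumption that $d$ is odd enters: it rules out central normal subgroups. In particular $N$ is itself a lattice in $\mathrm{SL}_d(\mathbb{R})$, so Bekka's character rigidity applies to $N$. Every extremal character of $N$ is either supported on the trivial element or factors through a finite quotient. Fix an $N$-invariant von Neumann subalgebra $\cM\subseteq L(\Gamma)$ and let $E:L(\Gamma)\to\cM$ be the trace-preserving conditional expectation. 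Since $\cM$ is $N$-invariant, $E$ is $N$-equivariant: $E(u_g x u_g^*)=u_gE(x)u_g^*$ for all $g\in N$.

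\textbf{Step 1: a family of positive definite functions.} For each $\gamma\in\Gamma$ set $\phi_\gamma(g)=\tau\bigl(E(u_\gamma)u_g E(u_\gamma)^*u_g^*\bigr)$ for $g\in N$. By equivariance, $\phi_\gamma(g)=\tau\bigl(E(u_\gamma)E(u_{g\gamma g^{-1}})^*\bigr)$. Each $\phi_\gamma$ is a positive definite function on $N$ that is conjugation invariant under $N$, and in fact under a finite-index subgroup. Decompose $\phi_\gamma$ into extremal characters using Bekka's theorem. The contribution from characters that factor through finite quotients is invariant under a finite-index normal subgroup $N_0\le N$. So $\phi_\gamma=c\,\delta_e+\psi$, where $\psi$ is $N_0$-invariant.

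\textbf{Step 2: Fourier coefficients of $E(u_\gamma)$.} Write $E(u_\gamma)=\sum_h a_h u_h$. The orbit $\{g\gamma g^{-1}:g\in N_0\}$ is infinite whenever $\gamma\ne e$, because $N_0$ is a lattice and centralizers of non-trivial elements have infinite index. Using this, we want to show that $E(u_\gamma)$ is supported on elements whose $N_0$-conjugacy behaviour matches that of $\gamma$. The goal is the dichotomy $E(u_\gamma)\in\{0,u_\gamma\}$. We first show that the limit of $\tau(E(u_\gamma)E(u_{g\gamma g^{-1}})^*)$ as $g\to\infty$ in $N_0$ vanishes, via a mixing argument: the coefficients $a_h$ are square-summable and are pushed off to infinity by conjugation. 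This forces the $\psi$-part of $\phi_\gamma$ to be a constant equal to $\lim$, hence $0$. So $\phi_\gamma=\|E(u_\gamma)\|_2^2\,\delta_e$. Then we apply the Dudko--Jiang trick, which uses the fact that $E$ is a projection with $\tau(E(u_\gamma)u_\gamma^*)=\|E(u_\gamma)\|_2^2$, to conclude $E(u_\gamma)=\lambda u_\gamma$ with $\lambda\in\{0,1\}$. Once every $E(u_\gamma)$ lies in $\{0,u_\gamma\}$, the set $K=\{\gamma: E(u_\gamma)=u_\gamma\}$ is a subgroup, because $\cM$ is an algebra closed under adjoints, and $\cM=L(K)$ because $\cM$ is generated by $E(L(\Gamma))$.

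\textbf{Main obstacle.} I expect Step 2 to be the main obstacle. The issue is passing from the decomposition of $\phi_\gamma$ to the conclusion that the finite-quotient part of $\phi_\gamma$ vanishes and that $E(u_\gamma)$ is a scalar multiple of $u_\gamma$. Character rigidity only sees the $N$-conjugacy orbits, and elements $h\ne\gamma$ in the support of $E(u_\gamma)$ could a priori conspire. To handle this, I would first try to adapt the argument of \cite{kalantar2023invariant} and \cite{dudko2024character}, which exploits that for higher-rank lattices the relative commutant $L(N_0)'\cap L(\Gamma)=\mathbb{C}$ and that $N_0$ acts mixingly on $\ell^2(\Gamma\setminus\{e\})$ by conjugation. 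From this one would deduce that $E$ commutes with the normalized coefficient maps $x\mapsto\tau(xu_\gamma^*)u_\gamma$, which gives the result.
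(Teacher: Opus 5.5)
Your argument breaks down already in Step~1. Your $\phi_\gamma(g)=\tau\bigl(E(u_\gamma)u_gE(u_\gamma)^*u_g^*\bigr)$ is positive definite on $N$, being a matrix coefficient of the conjugation representation. But it is not a character. By equivariance, $\phi_\gamma(g)=\tau\bigl(E(u_\gamma)u_{g\gamma g^{-1}}^*\bigr)$ is just the Fourier coefficient of $E(u_\gamma)$ at $g\gamma g^{-1}$, and $\phi_\gamma(hgh^{-1})=\phi_{h^{-1}\gamma h}(g)$ for $h\in N$, which differs from $\phi_\gamma(g)$ in general. For instance, for $\cM=L(\Gamma)$ you get $\phi_\gamma=1_{C_N(\gamma)}$. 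This is conjugation invariant only if $C_N(\gamma)\trianglelefteq N$, and by Margulis that fails for every $\gamma$ of infinite order.

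So Bekka's (or Peterson's) character rigidity says nothing about $\phi_\gamma$, and the splitting $\phi_\gamma=c\,\delta_e+\psi$ is unfounded. Even for a genuine character, the non-$\delta_e$ part is an integral over possibly infinitely many finite quotients, so no single $N_0$ need exist.

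Step~2 then aims at a false statement. $\phi_\gamma$ is constant on the cosets $gC_N(\gamma)$ and equals $\|E(u_\gamma)\|_2^2$ on $C_N(\gamma)$. Hence $\phi_\gamma=\|E(u_\gamma)\|_2^2\,\delta_e$ fails whenever $E(u_\gamma)\neq 0$ and $C_N(\gamma)\neq\{e\}$, e.g.\ for $\cM=L(\Gamma)$, which certainly satisfies the theorem. The ``mixing'' you invoke fails for the same reason. Conjugation by $N_0$ on $\ell^2(\Gamma\setminus\{e\})$ is weakly mixing (all orbits are infinite) but not mixing, since the return sets $C_{N_0}(\gamma)$ are infinite, e.g.\ for $\gamma$ of infinite order.

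Finally, $L(N_0)'\cap L(\Gamma)=\bC$ only helps once some $L(K_0)$, with $K_0\trianglelefteq\Gamma$ of finite index, is known to lie inside $\cM$. Producing that foothold is the real difficulty. Requiring $E$ to commute with $x\mapsto\tau(xu_\gamma^*)u_\gamma$ merely restates the conclusion.

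The missing idea is to work with functions that genuinely are characters of $N$, namely $\phi_{\cM}(g)=\|\E_{\cM}(u_g)\|_2^2$ and $\phi_{\cM'}(g)=\|\E_{\cM'\cap L(\Gamma)}(u_g)\|_2^2$. These are $N$-conjugation invariant because $\E_{\cM}(u_{hgh^{-1}})=u_h\E_{\cM}(u_g)u_h^*$ for $h\in N$. By Corollary~\ref{cor:trivial_center}, both $\cM$ and $\cM'\cap L(\Gamma)$ have trivial center. So for $g\in N$ with $\E_{\cM}(u_g)\neq 0$, Lemma~\ref{lem:factorisation} gives $u_g=a(g)b(g)$ with $a(g)\in\mathcal{U}(\cM)$ and $b(g)\in\mathcal{U}(\cM'\cap L(\Gamma))$. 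A short computation then yields $\phi_{\cM}(g)\phi_{\cM'}(g)=|\tau(b(g))\tau(a(g))|^2=|\tau(u_g)|^2=0$ for all $g\in N\setminus\{e\}$.

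Character rigidity enters only through the character decomposition property of $N$ \cite{dudko2024character}, which forces one of the two characters to be $\delta_e$ on $N$. The Bessel-type argument of Lemma~\ref{lem:GNS_vanishing}, with $N\setminus\{e\}$ as thick set, shows that an $N$-invariant subalgebra whose function vanishes on $N\setminus\{e\}$ is $\bC$. Hence, if $\cM\neq\bC$, then $\cM'\cap L(\Gamma)=\bC$, every $b(g)$ is a scalar, and $K=\{g\in N:u_g\in\cM\}$ is a non-trivial normal subgroup of $N$. By Margulis it has finite index, and its normal core $K_0\trianglelefteq\Gamma$ gives an irreducible inclusion $L(K_0)\subseteq\cM\subseteq L(\Gamma)$. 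The intermediate subalgebra theorem \cite[Proposition~4.4]{bedos2023c} then yields $\cM=L(H)$.

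Note that elements of $\Gamma\setminus N$ are never treated one at a time through characters. They are absorbed by this last step, which has no counterpart in your plan.
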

Let us now discuss our proof strategy. A key new ingredient, developed
in Section~\ref{sec:vanishing}, is a \emph{General Vanishing Principle}. This principle is applied
in two distinct geometric settings: for acylindrically hyperbolic
groups and also for higher-rank
lattices.

The proof for torsion-free acylindrically hyperbolic groups then
naturally splits into two cases, depending on whether the Fourier
support of any element of $\cM$ contains a loxodromic element of $N$.
If not, the General Vanishing Principle forces $\cM = \bC$. If so, a
selective averaging principle, a ping-pong construction in a free
subgroup of $N$, and a commutant rigidity argument combine to show
$\cM = L(K)$ for some subgroup $K \leq \Gamma$. The $C^*$-algebraic
case additionally requires $C^*$-irreducible inclusion theory, and is
restricted to torsion-free hyperbolic groups for reasons explained in
Section~\ref{sec:ISR}. The proof of Theorem~\ref{thm:latticeISR} is
structurally different; relying instead on unitary factorization and
the Character Decomposition Property, heavily influenced by \cite{dudko2024character}, is discussed in detail in
Section~\ref{sec:lattice}. 
\subsection*{Acknowledgements} The author thanks Yongle Jiang and Adam Skalski for many helpful discussions and conversations. He also thanks them for taking the time to read through an earlier draft and for their suggestions. He also expresses his gratitude to Yair Glasner for his careful reading of an earlier version of this manuscript and for finding many inaccuracies and suggesting their corrections.

\section{Preliminaries}

\subsection{Acylindrical actions and acylindrically hyperbolic groups}

We recall the basic definitions following Osin~\cite{osin2016acylindrically} and
Dahmani--Guirardel--Osin~\cite{dahmani2017hyperbolically}.

\begin{defn}[Acylindrical action]
An action of a group $\Gamma$ on a metric space $(X, d)$ by isometries is
\emph{acylindrical} if for every $\varepsilon > 0$, there exist constants $R, N > 0$ such
that for all $x, y \in X$ with $d(x, y) \geq R$,
\[
|\{g \in \Gamma : d(x, gx) \leq \varepsilon \text{ and } d(y, gy) \leq \varepsilon\}| \leq N.
\]
A group $\Gamma$ is \emph{acylindrically hyperbolic} if it admits a non-elementary
acylindrical action on a Gromov hyperbolic space.
\end{defn}
A feature we exploit heavily is the rigid structure of maximal elementary
subgroups of individual elements.
\begin{defn}[Elementary closure and primitive elements]
For a loxodromic element $g$ in an acylindrically hyperbolic group $\Gamma$, the
\emph{elementary closure} is
\[
E_\Gamma(g) = \{h \in \Gamma : |\langle g \rangle : \langle g \rangle \cap
h\langle g \rangle h^{-1}| < \infty\}.
\]
In a torsion-free group, $E_\Gamma(g) = C_\Gamma(g)$, which is always infinite cyclic. An
element $g \in \Gamma$ is called \emph{primitive} if $E_\Gamma(g) = \langle g \rangle$,
i.e., $g$ has no proper roots in $\Gamma$.
\end{defn}

Consider a group $\Gamma$ acting on a hyperbolic space $S$. An element $g \in \Gamma$ of infinite order is defined as loxodromic if it possesses exactly two fixed points, $x_g^+$ and $x_g^-$, on the Gromov boundary $\partial S$, with the property that $g^nx \to x_g^+$ for all $x \in \partial S \setminus \{x_g^-\}$. Bestvina and Fujiwara \cite{bestvina2002bounded} introduced the condition of \say{weak proper discontinuity} (WPD). An element $g \in \Gamma$ is called a WPD element if, for any $x \in S$ and $\epsilon > 0$, there exists an integer $M \in \mathbb{N}$ such that only a finite number of elements $h \in \Gamma$ satisfy both $d(x, hx) < \epsilon$ and $d(g^Mx, hg^Mx) < \epsilon$. The connection between these concepts and acylindrically hyperbolic groups was later formalized by Osin \cite[Theorem~1.2]{osin2016acylindrically}. He proved that $\Gamma$ is acylindrically hyperbolic if and only if it contains a loxodromic WPD element $g \in \Gamma$. Furthermore, whenever such a $g$ exists, $E_{\Gamma}(g) = \text{Stab}_{\Gamma}(\{x_g^+, x_g^-\})$ (see, for example, \cite[Lemma~6.5]{dahmani2017hyperbolically}).
\begin{remark}\label{rem:primitive_decomp}
In a torsion-free acylindrically hyperbolic group, every non-trivial element $h \in \Gamma$
can be written as $h = t^k$ for some primitive element $t \in \Gamma$ and integer
$k \geq 1$. Indeed, the maximal amenable subgroup $\Lambda$ containing $h$ satisfies
$\Lambda \cong \mathbb{Z} = \langle t \rangle$ for some primitive $t$, and
$h \in \langle t \rangle$. We refer the reader
to~\cite[Remark~4.2]{amrutam2025invariantC*} for details.
\end{remark}
\begin{remark}
In the context of this paper, we use the term ``primitive'' strictly in the algebraic sense of having no proper roots in the ambient group (i.e., rootlessness). We caution the reader that this differs from the standard usage in combinatorial group theory, where \say{primitive} often denotes an element that can be extended to a free basis of a free group.
\end{remark}

\subsection{Property \texorpdfstring{$P_{naive}$}{} and free subgroups in normal subgroups}

A key geometric input in our arguments is the existence of elements that generate free
subgroups together with any given element. Recall that a group $\Gamma$ has \emph{property
$P_{naive}$} if for any finite subset $F \subset \Gamma \setminus \{e\}$, there exists an
element $s \in \Gamma$ of infinite order such that
$\langle s, t \rangle \cong \langle s \rangle \star \langle t \rangle$ for all $t \in F$.
This property was introduced in~\cite{bekka1994} and established for all acylindrically
hyperbolic groups with trivial amenable radical in~\cite[Theorem~0.2]{AD19}.

In our setting, we are interested in a relative version specific to primitive elements, and the relevant free elements can always be found inside the normal subgroup
$N$. This is the content of the following lemma, which we use in
Section~\ref{sec:cstar}.
\begin{lemma}\label{lem:free_in_normal}
Let $\Gamma$ be a torsion-free acylindrically hyperbolic group with trivial amenable
radical, and let $N \trianglelefteq \Gamma$ be a non-trivial normal subgroup. Then for
every primitive loxodromic element $t \in \Gamma$,
there exists an element $s \in N$ such that
$\langle t, s \rangle \cong \langle t \rangle \star \langle s \rangle$.
\end{lemma}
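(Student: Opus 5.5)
We prove the lemma by finding inside $N$ a loxodromic element whose fixed points on $\partial S$ are disjoint from those of $t$, and then playing ping-pong.

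\textbf{Setup.} Fix a non-elementary acylindrical action of $\Gamma$ on a hyperbolic space $S$ for which $t$ is loxodromic. Since $\Gamma$ has trivial amenable radical, the finite radical $K(\Gamma)$ is trivial. Since $N$ is non-trivial, $N$ is not contained in $K(\Gamma)$. By Osin's results on normal subgroups of acylindrically hyperbolic groups \cite{osin2016acylindrically}, the induced action of $N$ on $S$ is then non-elementary. In particular, $N$ contains infinitely many loxodromic elements with pairwise disjoint fixed-point pairs on $\partial S$.

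\textbf{Choosing $s_0$.} We need a loxodromic $s_0 \in N$ with $\{s_0^{\pm}\}\cap\{t^{\pm}\}=\emptyset$. Recall $E_\Gamma(t)=C_\Gamma(t)=\langle t\rangle$, using torsion-freeness and primitivity. Suppose every loxodromic of $N$ shared a fixed point with $t$. Two loxodromic WPD elements sharing one boundary fixed point share both, by acylindricity. So every loxodromic of $N$ would lie in $\mathrm{Stab}(\{t^+,t^-\})=E_\Gamma(t)$. This would force the loxodromics of $N$ to generate an elementary subgroup, contradicting non-elementarity of $N$. Hence such an $s_0$ exists.

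\textbf{Ping-pong.} Take small disjoint neighbourhoods $U^\pm$ of $t^\pm$ and $V^\pm$ of $s_0^\pm$ in $S\cup\partial S$. By north–south dynamics of loxodromics, there is $n$ with
\[
t^{k}\bigl(S\cup\partial S\setminus U^{\mp}\bigr)\subseteq U^{\pm}\quad (k\neq 0)
\]
after replacing $t$ by a power if needed. The analogous inclusion holds for $s:=s_0^{n}$ with $V^\pm$, and here we can take all nonzero powers of $s$, not just large ones. The ping-pong lemma then gives $\langle t^m, s\rangle\cong\langle t^m\rangle\star\langle s\rangle$ for some $m$. Note that $s\in N$ since $N$ is a subgroup.

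\textbf{Main obstacle: removing the power of $t$.} Ping-pong only gives freeness for a large power of $t$, whereas the lemma requires $t$ itself. We handle this by adjusting $s$ rather than $t$: set $s=s_0^{n}$ with $n$ large, so the domains $V^\pm$ are tiny and $s^{\pm1}$ already contract the complement of $V^{\mp}$ into $V^{\pm}$. It remains to have all nonzero powers $t^k$ map $V:=V^+\cup V^-$ off itself.

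This works because $\langle t\rangle$ acts properly discontinuously on $\partial S\setminus\{t^\pm\}$. Only finitely many $t^k$ fail to move a small compact-ish neighbourhood of $\{s_0^\pm\}$ off itself. If some $t^k$ with $k\ne0$ fixed $s_0^\pm$ setwise, then $t^k\in E_\Gamma(s_0)$, hence $t^k$ would commute with a power of $s_0$. In a torsion-free group this would put $t$ and $s_0$ in a common cyclic subgroup, contradicting the disjointness of their fixed points. Shrinking $V$ therefore makes $t^k V\cap V=\emptyset$ for all $k\ne0$.

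A ping-pong variant then applies, with table tennis sets $V$ for $\langle s\rangle$ and its complement for $\langle t\rangle$. In this variant the $\langle t\rangle$-factor only needs $t^k V\subseteq S\cup\partial S\setminus V$. This yields $\langle t,s\rangle\cong\langle t\rangle\star\langle s\rangle$.

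If this variant proves delicate in a non-proper space $S$, the fallback is a Dahmani–Guirardel–Osin-type result that $\langle t\rangle$ and suitable $s\in N$ generate a free product, e.g.\ via the hyperbolically embedded subgroup $E_\Gamma(s_0)=\langle s_0\rangle$ and small cancellation.
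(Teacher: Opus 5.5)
Your argument is correct, and it takes a genuinely different route from the paper's.

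\textbf{How the two proofs differ.} The paper never plays ping-pong by hand. It first builds, inside $N$, an element $r$ with $E_\Gamma(r)=\langle r\rangle$ and $\langle r\rangle\cap\langle t\rangle=\{e\}$, using a hyperbolically embedded free subgroup of $N$ and its free factors $\langle h^kgh^{-k}\rangle$. It then uses primitivity, $E_\Gamma(t)=\langle t\rangle$, to add $\langle t\rangle$ to a peripheral structure. Finally it quotes the Arzhantseva--Minasyan--Osin lemma to get $\langle t\rangle\star\langle r^n\rangle$ for large $n$.

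You instead stay inside one acylindrical action:
\begin{itemize}
\item any loxodromic $s_0\in N$ with endpoints disjoint from $\{t^{\pm}\}$ suffices;
\item you find a neighbourhood $V$ of $\{s_0^{\pm}\}$ with $t^kV\cap V=\emptyset$ for all $k\neq 0$, using north--south dynamics of $t$ for all but finitely many $k$, and the fact that no $t^k$ sends $s_0^{\pm}$ into $\{s_0^{\pm}\}$ for the rest;
\item a large power $s=s_0^n$ then maps the complement of $V$ into $V$;
\item the ping-pong lemma, with $\partial S\setminus V$ for $\langle t\rangle$ and $V$ for $\langle s\rangle$, finishes.
\end{itemize}

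\textbf{What each route buys.} Yours is more elementary and more general. It needs neither primitivity of $t$ nor $E_\Gamma(s_0)=\langle s_0\rangle$. It also avoids the relatively hyperbolic machinery (Osin's enlargement of peripheral structures and the AMO lemma), which is formulated for relatively hyperbolic groups rather than general acylindrically hyperbolic ones. The price is that you must justify the boundary dynamics yourself on a possibly non-proper hyperbolic space.

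\textbf{Three points to tighten.}
\begin{itemize}
\item \emph{Proper discontinuity.} ``$\langle t\rangle$ acts properly discontinuously on $\partial S\setminus\{t^{\pm}\}$'' is not the right statement when $S$ is not proper. What you actually use, and what is true, is uniform north--south dynamics for Gromov-product neighbourhoods. Fix neighbourhoods $U^{\pm}$ of $t^{\pm}$ disjoint from a neighbourhood $W$ of $\{s_0^{\pm}\}$. Then $t^{\pm k}W\subseteq U^{\pm}$ for all $k\ge K_0$. Shrinking $W$ afterwards handles $0<|k|<K_0$, since each $t^k$ is an isometry and so shifts Gromov products based at $o$ by at most $d(o,t^ko)$.
\item \emph{Order of choices.} $V$ must be fixed, depending only on $t$ and $s_0$, before $n$ is chosen.
\item \emph{Disjointness condition.} You need $t^k\{s_0^+,s_0^-\}\cap\{s_0^+,s_0^-\}=\emptyset$, not merely that $t^k$ does not stabilize the pair. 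The map $t^k$ cannot fix $s_0^{+}$ or $s_0^{-}$, since its only boundary fixed points are $t^{\pm}$. If $t^ks_0^{\pm}=s_0^{\mp}$, then $s_0$ and $t^ks_0t^{-k}$ share one endpoint, hence both. So $t^{2k}$ fixes $s_0^{\pm}$, which is again impossible.
\end{itemize}
With these in place, your first ping-pong attempt with a power of $t$ and the fallback to small cancellation are unnecessary.
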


\begin{proof}
Since $N$ is a non-trivial normal subgroup of a non-elementary acylindrically hyperbolic
group with trivial amenable radical, $N$ is itself non-elementary and acylindrically
hyperbolic by~\cite[Corollary~1.5]{osin2016acylindrically}, and it inherits trivial
amenable radical. Since $N$ contains infinitely many pairwise independent loxodromic WPD elements, arguing similarly as in \cite[Theorem~6.14]{dahmani2017hyperbolically}, we can find $g_1, g_2 \in N$ such that $E_\Gamma(g_i) \cap \langle t \rangle = \{e\}$, and $\mathbb{F}_2\cong\langle g, h\rangle\hookrightarrow_{h} \Gamma$ (in the sense of \cite[Definition~2.9]{osin2016acylindrically}). Now define, for each integer $k \geq 1$, $g^{(k)} := h^k g h^{-k}$.
For each fixed $k$, observe that
\[
\langle g,h \rangle = \langle h \rangle \star \langle g^{(k)} \rangle,
\]
because $h$ and $g^{(k)}$ freely generate $\mathbb{F}_2$. In particular,
$\langle g^{(k)} \rangle$ is a free factor of $\langle g,h \rangle$. By
\cite[Example~2.12(c)]{dahmani2017hyperbolically}, free factors are hyperbolically
embedded, so $\langle g^{(k)} \rangle \hookrightarrow_h \langle g,h \rangle$. Transitivity
of hyperbolic embeddings~\cite[Proposition~4.35]{dahmani2017hyperbolically} then yields
\[
\langle g^{(k)} \rangle \hookrightarrow_h \Gamma.
\]
By almost malnormality of hyperbolically embedded subgroups (\cite[ Proposition 4.33]{dahmani2017hyperbolically}), \cite[Remark 6.2]{dahmani2017hyperbolically}, and \cite[Corollary 6.6]{dahmani2017hyperbolically}, it follows that $E_\Gamma(g^{(k)}) = \langle g^{(k)} \rangle$. Moreover, since $g, h \in N$ and $N  \le \Gamma$, we have $g^{(k)} \in N$ for every
$k$. Because the subgroups $\{\langle g^{(k)} \rangle\}_{k \geq 1}$ are pairwise independent, at most one of them can intersect $\langle t \rangle$ non-trivially. We may therefore fix a $k \geq 1$ such that $\langle g^{(k)} \rangle \cap \langle t \rangle = \{e\}$, and set $r := g^{(k)}$. 

By~\cite[Proposition~2.12]{osin2016acylindrically}, $\Gamma$ is hyperbolic relative to $\{\langle r \rangle\}$. Since $E_\Gamma(t) = \langle t \rangle$ by hypothesis and $\langle r \rangle \cap \langle t \rangle = \{e\}$, \cite[Theorem~4.3, Corollary~1.7]{osin2006elementary} allows us to enlarge the peripheral collection to $\{\langle r \rangle, \langle t \rangle\}$.

We are now in the setting of \cite[Lemma~7]{arzhantseva2006relatively}: $\Gamma$ is
hyperbolic relative to a collection containing the peripheral subgroup $H_\lambda = \langle t \rangle$,
and $r$ is a hyperbolic element with $E_\Gamma(r) = \langle r \rangle$. That lemma produces
$N_0 \in \mathbb{N}$ such that for all $n \geq N_0$,
\[
\langle \langle t \rangle, \, r^n \rangle \cong \langle t \rangle \star \langle r^n \rangle.
\]
Setting $s := r^{N_0}$ (which lies in $N$ because $r \in N$ and $N$ is a subgroup),
the claim follows.
\end{proof}

To handle the case $\cM \cap L(N) \neq \mathbb{C}$, we need a commutant rigidity result
for certain chains of normal subgroups. The following lemma provides the required geometric
input, showing that such chains in an acylindrically hyperbolic group always yield infinite
conjugacy classes and consequently a trivial relative commutant in $L(\Gamma)$. Recall that a subgroup $K\le \Gamma$ is relatively i.c.c.\ if for every $g \in \Gamma \setminus \{e\}$, the conjugacy class
$\{kgk^{-1} \mid k \in K\}$ is infinite. Moreover, we say that a subgroup $\Lambda \leq \Gamma$ is called $s$-normal in $\Gamma$ if for
every $t \in \Gamma$ one has $|\Lambda \cap t^{-1}\Lambda t| = \infty$.

\begin{lemma}\label{lem:plumpness}
Let $\Gamma$ be a torsion-free acylindrically hyperbolic group with trivial amenable radical. Consider the following chain of non-trivial subgroups
$K \trianglelefteq N \trianglelefteq \Gamma$. Then $K$ is relatively i.c.c.\ in $\Gamma$.
In particular, $L(K)' \cap L(\Gamma) = \mathbb{C}$.
\end{lemma}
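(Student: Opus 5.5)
The plan is to show directly that for every $g\in\Gamma\setminus\{e\}$ the $K$-conjugacy class $\{kgk^{-1}: k\in K\}$ is infinite. The ``in particular'' clause then follows by the standard Fourier-coefficient argument. If $x=\sum_h x_h\lambda_h\in L(K)'\cap L(\Gamma)$, then $x_{khk^{-1}}=x_h$ for all $k\in K$. Since $\sum_h|x_h|^2<\infty$, every coefficient supported on an infinite $K$-conjugacy class must vanish, so $x\in\mathbb{C}1$.

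First we upgrade the chain to acylindrically hyperbolic subgroups. By \cite[Corollary~1.5]{osin2016acylindrically}, exactly as in the proof of Lemma~\ref{lem:free_in_normal}, $N$ is non-elementary acylindrically hyperbolic with trivial amenable radical. The amenable radical of $N$ is characteristic in $N$, hence normal in $\Gamma$ and amenable, so it is trivial. Applying the same corollary to $K\trianglelefteq N$ shows that $K$ is non-elementary acylindrically hyperbolic as well. Here we use that $K\neq\{e\}$ and that $N$ has no nontrivial amenable normal subgroups, so $K$ is not amenable and hence not elementary. More to the point, the action of $K$ on the hyperbolic space $S$ on which $\Gamma$ acts acylindrically is still acylindrical, since that property passes to subgroups. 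The task is to see that this action is non-elementary. The limit set $\Lambda(N)$ is $\Gamma$-invariant, and $\Lambda(K)$ is $N$-invariant. Since $N$ is non-elementary, its limit set is the minimal closed invariant set, so $\Lambda(K)$ is either all of $\Lambda(N)$ or has at most two points. Acylindricity rules out the parabolic case, so if $\Lambda(K)$ had at most two points, $K$ would be virtually cyclic or bounded, hence amenable. But $K$ is a nontrivial amenable normal subgroup of $N$, contradicting trivial amenable radical. Therefore $K$ contains independent loxodromic elements of $S$.

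Now fix $g\neq e$ and suppose, for contradiction, that $g$ has finitely many $K$-conjugates. Then $C_K(g)$ has finite index in $K$, and in particular contains a power of every loxodromic element of $K$. We split into two cases.

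If $g$ is loxodromic for the action on $S$, then $C_K(g)\subseteq C_\Gamma(g)\subseteq E_\Gamma(g)$, which is infinite cyclic, since $g$ is WPD by acylindricity and we use the torsion-free description recalled earlier. A finite-index subgroup of the non-elementary group $K$ cannot be cyclic, which is a contradiction.

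If $g$ is not loxodromic, pick two independent loxodromics $a,b\in K$ and powers $a^m,b^m\in C_K(g)$. Then $g$ commutes with $a^m$, so $g$ preserves the pair $\{a^{\pm\infty}\}$ and lies in $E_\Gamma(a^m)=C_\Gamma(a^m)\cong\mathbb{Z}$. Since $g\neq e$ and $\Gamma$ is torsion-free, $g$ is a nontrivial element of this infinite cyclic group, which is generated by a loxodromic. Hence $g$ is itself loxodromic, which is again a contradiction. In fact this case argument shows that the first case needs no separate treatment.

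The main obstacle is the middle step: verifying that $K$, which is only normal in $N$ and not in $\Gamma$, still acts non-elementarily with independent loxodromics. This is the reason for passing through $N$'s trivial amenable radical rather than $\Gamma$'s. If the limit-set argument proves delicate, the fallback is to apply Osin's Corollary~1.5 twice together with his result that every non-elementary acylindrically hyperbolic subgroup contains loxodromic WPD elements of the ambient action. The claim $E_\Gamma(a^m)=C_\Gamma(a^m)$ cyclic is taken from the preliminaries on elementary closures.
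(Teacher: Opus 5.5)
You have a genuine gap in the step you yourself flagged: showing that the restricted action $K\curvearrowright S$ is non-elementary. In the case where $K$ has bounded orbits ($\Lambda(K)=\emptyset$) you conclude ``bounded, hence amenable''. That implication is false. Elliptic subgroups of acylindrical actions can be non-amenable, even in the torsion-free case. For example, the free factor $\mathbb{F}_2$ of $\mathbb{F}_2*\mathbb{Z}$ fixes a vertex of the Bass--Serre tree, and that action is acylindrical and non-elementary.

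Your fallback fails for the same reason. Osin's Corollary~1.5 only says that $K$ (and likewise $N$) is acylindrically hyperbolic as an abstract group, i.e.\ for \emph{some} action. No result says such a subgroup contains loxodromics for the ambient action on $S$; the same $\mathbb{F}_2$ is a counterexample. The issue already appears one level up: your limit-set argument assumes $N\curvearrowright S$ is non-elementary, and Corollary~1.5 does not give that either.

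The missing idea is that the elliptic case is excluded by normality plus acylindricity, not by amenability. Suppose $Kx$ has diameter $D$ and $h\in N$ is loxodromic. Since $h^{-n}Kh^{n}=K$, for every $k\in K$ you get $d(kh^nx,h^nx)=d(h^{-n}kh^nx,x)\le D$. So every element of $K$ moves both $x$ and $h^nx$ by at most $D$. Choose $n$ with $d(x,h^nx)$ at least the constant $R$ that acylindricity attaches to $\varepsilon=D$. Then $|K|$ is uniformly bounded, so $K$ is finite, hence trivial because $\Gamma$ is torsion-free, a contradiction.

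This is exactly the content of \cite[Lemma~7.1]{osin2016acylindrically}, which the paper invokes after first showing that $K$ is $s$-normal in $\Gamma$. You can instead apply that lemma twice: to $N\trianglelefteq\Gamma$, which gives loxodromics $h\in N$, and then to $K\trianglelefteq N$. Your amenable-radical argument does correctly rule out the lineal (virtually cyclic) case, and the limit-set discussion becomes unnecessary.

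With this repair the rest of your proof is correct. Your centralizer step is cleaner than the paper's. A finite-index $C_K(g)$ contains powers $a^m,b^m$ of two independent loxodromics of $K$, which forces $g\in E_\Gamma(a)\cap E_\Gamma(b)=\{e\}$. This avoids the paper's passage through the normal core of $C_K(g)$ and a second $s$-normality argument. Your direct Fourier-coefficient proof of $L(K)'\cap L(\Gamma)=\mathbb{C}$ is also fine.
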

\begin{proof}
 For any
$g \in \Gamma$, observe that $gKg^{-1} \cap K$ is a normal subgroup of $N$. Since
$\Gamma$ has a trivial amenable radical, it follows that $N$ has a trivial amenable radical. Hence $|gKg^{-1} \cap K| = \infty$,
or trivial. However, if $gKg^{-1} \cap K = \{e\}$, it would imply that
$[K, gKg^{-1}] = \{e\}$. Indeed, since $K \trianglelefteq N$ and $N \trianglelefteq \Gamma$, we have
$gNg^{-1} = N$, and hence $gKg^{-1} \trianglelefteq gNg^{-1} = N$. Thus both
$K$ and $gKg^{-1}$ are normal subgroups of $N$. For any $k \in K$ and
$k' \in gKg^{-1}$, the commutator
\[
[k,k'] = k^{-1}(k')^{-1}kk'
\]
lies in $K$ by normality of $K$ in $N$, and lies in $gKg^{-1}$ by normality of
$gKg^{-1}$ in $N$. Hence $[k,k'] \in K \cap gKg^{-1} = \{e\}$, giving
$[K, gKg^{-1}] = \{e\}$. Choose $k \in K$ a primitive loxodromic element. Then, for any
$t \in \Gamma$ with $tk = kt$, we see that
$t \in \mathrm{Fix}\{x_k^+, x_k^-\} = \langle k \rangle$ (also follows directly from the
proof of~\cite[Corollary~6.9]{osin2016acylindrically}). Consequently, for any
$\tilde{k} \in K$, $g\tilde{k}g^{-1} \in \langle k \rangle \subset K$. Therefore,
$gKg^{-1} \subset K$, a contradiction to $gKg^{-1} \cap K = \{e\}$. This implies
$|gKg^{-1} \cap K| = \infty$, thereby showing that $K$ is $s$-normal. Using
\cite[Lemma~7.1]{osin2016acylindrically}, we see that the acylindrical non-elementary
action $\Gamma \curvearrowright S$ restricted to $K$ is non-elementary. Therefore, using
\cite[Theorem~1.1]{osin2016acylindrically}, we see that $K$ contains infinitely many
independent loxodromic elements. To show that $K$ is relatively i.c.c.\ in $\Gamma$, by the orbit-stabilizer theorem,
we must show that the centralizer $C_K(g)$ has infinite index in $K$.

Assume for the sake of contradiction that there exists a non-trivial element
$g \in \Gamma \setminus \{e\}$ such that $[K : C_K(g)] < \infty$. Let
$C = \mathrm{Core}_K(C_K(g))$ be the normal core of the centralizer inside $K$. Moreover,
$[K : C] < \infty$. By construction, $C$ is a non-trivial normal subgroup of $K$. Since
$K$ is acylindrically hyperbolic (see~\cite[Corollary~1.5]{osin2016acylindrically}), $C$
is $s$-normal in $K$. Therefore, by~\cite[Theorem~1.1]{osin2016acylindrically}, $C$
contains infinitely many independent loxodromic elements. Let $s, t \in C$ be two such
independent loxodromic elements such that
$\mathrm{Fix}\{x_s^+, x_s^-\} \cap \mathrm{Fix}\{x_t^+, x_t^-\} = \emptyset$. Because
$C \subseteq C_K(g)$, both $s$ and $t$ commute with $g$. In an acylindrically hyperbolic
group, the centralizer of any loxodromic element is contained in its maximal elementary
subgroup, giving us that
\[
g \in C_\Gamma(s) \cap C_\Gamma(t) \subseteq E(s) \cap E(t)
= \langle s \rangle \cap \langle t \rangle = \{e\}.
\]
This is a contradiction. The final statement $L(K)' \cap L(\Gamma) = \mathbb{C}$ follows
from the relative i.c.c.\ property by~\cite[Theorem~3.7]{jiang2021maximal}.
\end{proof}
\begin{remark} One can even argue similarly as in \cite[Lemma~3.5]{amrutam2021intermediate} to show that $K$ is plump in $\Gamma$.
\end{remark}

\subsection{Group \texorpdfstring{$C^*$}{}-algebras and von Neumann algebras}

For a countable discrete group $\Gamma$, the reduced group $C^*$-algebra $C_r^*(\Gamma)$
is generated inside $\mathbb{B}(\ell^2\Gamma)$ by the left regular representation
$\lambda: \Gamma \to \mathbb{B}(\ell^2\Gamma)$. It carries the canonical tracial state
$\tau_0$ determined by $\tau_0(\lambda(g)) = \delta_{g,e}$. The group von Neumann algebra
$L(\Gamma)$ is the weak operator closure of $C_r^*(\Gamma)$ inside $\mathbb{B}(\ell^2\Gamma)$.
Throughout this paper, we use $\cA$ to denote a unital $C^*$-subalgebra of $C_r^*(\Gamma)$ and
$\cM$ to denote a von Neumann subalgebra of $L(\Gamma)$.

Recall that for any subgroup $\Lambda \le \Gamma$, there is a canonical, faithful, normal conditional expectation $\mathbb{E}_\Lambda : L(\Gamma) \to L(\Lambda)$ determined by its action on the generating unitaries:
\[
\mathbb{E}_\Lambda\left(\lambda(s)\right)=\begin{cases}
\lambda(s) & \mbox{if } s \in \Lambda \\
0 & \mbox{otherwise.}
\end{cases}
\]
For a single element $g \in \Gamma$, we write $\mathbb{E}_g$ for the expectation onto $L(\langle g \rangle)$. 
Similarly, for any subgroup
$H \leq \Gamma$, there is a canonical conditional expectation
$\E_H : C_r^*(\Gamma) \to C_r^*(H)$. We
write $c_\gamma(a) := \tau_0(a\lambda(\gamma)^*)$ for the Fourier coefficient of an
element $a \in C_r^*(\Gamma)$ (or $a \in L(\Gamma)$) at $\gamma \in \Gamma$.

\subsection{Furstenberg boundary and invariant amenable subalgebras}
In this subsection, we establish that invariant amenable subalgebras within the appropriate $C^*$-simple geometric settings must be trivial. This automatically guarantees that any $N$-invariant subalgebra possesses a trivial center. To achieve this, we make use of the Furstenberg boundary.

The Furstenberg boundary, denoted by $\partial_F\Gamma$, is the universal boundary associated with the group $\Gamma$, in the sense that any other $\Gamma$-boundary $Y$ can be obtained as a $\Gamma$-equivariant continuous image of it. The existence of this space is typically established via a standard product construction over all possible boundaries (see, e.g., \cite[p.~199]{Furstenberg1973}), and it is uniquely determined up to $\Gamma$-equivariant homeomorphism. 

The dynamics of the $\Gamma$-action on $\partial_F\Gamma$ encode deep structural information about the group itself. For instance, Kalantar and Kennedy \cite{KalantarKennedy2017} utilized this boundary action to provide a purely dynamical characterization of $C^*$-simplicity. Furthermore, a classical result states that $\Gamma$ is an amenable group if and only if its Furstenberg boundary consists of a single point (see, e.g., \cite[Theorem~3.1, Chapter~3]{Glasner1976}). Generalizing this, Furman \cite[Proposition~7]{Furman2003} proved that the amenable radical, $\operatorname{Rad}(\Gamma)$, coincides precisely with the kernel of the boundary action $\Gamma \curvearrowright \partial_F\Gamma$ (also see \cite[Proposition~2.8]{breuillard2017c}). This specific characterization serves as a primary tool for our subsequent arguments.

Finally, we note that the induced affine action $\Gamma \curvearrowright \operatorname{Prob}(\partial_F\Gamma)$ is irreducible. This means there are no non-trivial, weak*-closed, $\Gamma$-invariant convex subsets within $\operatorname{Prob}(\partial_F\Gamma)$.

We now establish the rigidity of amenable subalgebras invariant under normal subgroups. The argument is vis-à-vis~\cite [Proposition~3.3]{amrutam2025amenable}, and we modify it wherever needed. 
\begin{prop}\label{prop:amenable_subalgebra}
Let $\Gamma$ be a countable discrete group and let $N \trianglelefteq \Gamma$ be a $C^*$-simple normal subgroup such that $C_\Gamma(N) = \{e\}$. Then every $N$-invariant amenable von Neumann subalgebra $\cM \le L(\Gamma)$ is trivial, i.e., $\cM = \bC$.
\end{prop}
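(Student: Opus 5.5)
We follow the strategy of \cite[Proposition~3.3]{amrutam2025amenable}: build an $N$-equivariant unital completely positive map from $L(\Gamma)$ into $C(\partial_F N)$, then use $C^*$-simplicity of $N$ together with $C_\Gamma(N)=\{e\}$ to force it to factor through the trace on $\cM$.

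\textbf{Step 1: an equivariant map on $\cM$.} Since $\cM$ is amenable, there is a conditional expectation (norm-one projection) $\Phi:\mathbb{B}(\ell^2\Gamma)\to \cM$. By Arveson extension, pick a ucp map $\psi_0:\cM\to C(\partial_F N)$ extending $a\mapsto\tau_0(a)1$. The aim is to produce a ucp map $\psi:\cM\to C(\partial_F N)$ that is $N$-equivariant for the conjugation action $\mathrm{Ad}\lambda(n)$ on $\cM$; this action is well defined because $\cM$ is $N$-invariant. The construction uses $N$-injectivity of $C(\partial_F N)$ (Hamana). Concretely, take the $N$-equivariant ucp map $\ell^\infty(N)\to C(\partial_FN)$, or the Poisson-type map $\mathbb{B}(\ell^2\Gamma)\to \ell^\infty(N)$, $T\mapsto (n\mapsto \langle \lambda(n)^*T\lambda(n)\delta_e,\delta_e\rangle)$. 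Composing gives an $N$-equivariant ucp map $\phi:L(\Gamma)\to C(\partial_F N)$, and its restriction $\psi=\phi|_{\cM}$ is what we use.

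\textbf{Step 2: the map is the trace on $\lambda(N)$.} By Kalantar--Kennedy, $C^*$-simplicity of $N$ means $N\curvearrowright\partial_F N$ is topologically free. The standard boundary argument then shows $\phi(\lambda(n))=0$ for $n\in N\setminus\{e\}$: the multiplicative domain contains $\lambda(N)$, so $\phi(\lambda(n))$ is supported on the fixed-point set of $n$, which has empty interior. The same has to be shown for $g\in\Gamma\setminus N$, and this is where $C_\Gamma(N)=\{e\}$ is used. Conjugation by $g$ normalizes $N$, hence induces an automorphism of $N$ and, by universality, a homeomorphism of $\partial_F N$. We may therefore extend the boundary to a $\Gamma$-boundary: $\partial_F N$ is a $\Gamma$-space and the $N$-action there is the restriction. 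The kernel of the $\Gamma$-action on $\partial_FN$ meets $N$ in $\mathrm{Rad}(N)=\{e\}$ (Furman), so the kernel is a normal subgroup commuting with $N$ and hence lies in $C_\Gamma(N)=\{e\}$. A freeness argument along the lines of Breuillard--Kalantar--Kennedy--Ozawa then gives $\phi(\lambda(g))=0$ for every $g\neq e$, with $\phi$ now chosen $\Gamma$-equivariantly. Continuity yields $\phi(x)=\tau_0(x)1$ for all $x\in L(\Gamma)$; on $L(\Gamma)$ one uses normality of $\tau_0$ on bounded sets together with the Fourier expansion. I expect this step, controlling the Fourier coefficients outside $N$, to be the main obstacle.

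\textbf{Step 3: conclusion.} With $\phi=\tau_0(\cdot)1$, the idea is to use that $\cM$ is amenable and $N$-invariant, so that $\cM$ (or its weak$^*$-dense $C^*$-part) admits an $N$-equivariant ucp map into $C(\partial_FN)$ that is \emph{multiplicative} on $\cM$ by boundary rigidity (uniqueness of such maps, as in \cite{amrutam2025amenable}). That map must coincide with $\tau_0$, so $\tau_0$ would be multiplicative on $\cM$. Then $\tau_0(x^*x)=|\tau_0(x)|^2$ for $x\in\cM$, and faithfulness of $\tau_0$ gives $x=\tau_0(x)1$. Hence $\cM=\bC$.
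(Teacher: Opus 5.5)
There is a genuine gap. Amenability of $\cM$ is never used in a substantive way, and Step~3, which carries the whole argument, is circular.

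Steps~1--2 only concern a map $\phi$ defined on all of $L(\Gamma)$, so they cannot see $\cM$. Indeed, for your Poisson-type map one has $\langle\lambda(n)^*x\lambda(n)\delta_e,\delta_e\rangle=\tau_0(\lambda(n)^*x\lambda(n))=\tau_0(x)$ for $x\in L(\Gamma)$. Hence $\phi|_{L(\Gamma)}=\tau_0(\cdot)1$ holds automatically. This uses neither $C^*$-simplicity nor $C_\Gamma(N)=\{e\}$, and it holds equally for non-amenable $N$-invariant subalgebras such as $L(N)$. So the step you flag as the main obstacle is vacuous.

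The multiplicative-domain remark is also backwards. If $\lambda(n)$ were in the multiplicative domain, $\phi(\lambda(n))$ would be a unitary, not $0$. In the boundary argument it is $C(\partial_FN)$, sitting inside a larger domain such as $\mathbb{B}(\ell^2\Gamma)$, that lies in the multiplicative domain.

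Step~3 then posits an $N$-equivariant ucp map on $\cM$ that is multiplicative and coincides with $\tau_0$. That says $\tau_0$ is a character of $\cM$, which by faithfulness is exactly the statement $\cM=\bC$. Boundary rigidity gives uniqueness of equivariant ucp maps $C(\partial_FN)\to C(\partial_FN)$. It does not give multiplicativity of equivariant maps out of an arbitrary amenable algebra; if $\cM$ were a $\mathrm{II}_1$ factor, no unital $*$-homomorphism $\cM\to C(\partial_FN)$ would exist at all. So nothing in your argument produces this map. (The Arveson step ``extending $a\mapsto\tau_0(a)1$'' is vacuous, and the expectation $\Phi$ is never used.)

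The missing idea is to make $\cM$ interact with the boundary through $\cM$-hypertraces. These are states $\varphi$ on $\mathbb{B}(\ell^2\Gamma)$ with $\varphi(aT)=\varphi(Ta)$ for all $a\in\cM$, $T\in\mathbb{B}(\ell^2\Gamma)$, and $\varphi|_{L(\Gamma)}=\tau_0$; this is where amenability of $\cM$ enters.

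Because $\cM$ is $N$-invariant, the set of such states is $\mathrm{Ad}\lambda(N)$-invariant. So their restrictions to $C(\partial_FN)$, acting as multiplication operators, form a nonempty weak$^*$-closed convex $N$-invariant subset of $\mathrm{Prob}(\partial_FN)$, hence all of it. Pick $\varphi$ restricting to $\delta_{x_0}$; then $C(\partial_FN)$ lies in its multiplicative domain.

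For $s\neq e$, use freeness of the extended action $\Gamma\curvearrowright\partial_FN$. Your BKKO freeness input is the right one, and this is where $C^*$-simplicity of $N$ and $C_\Gamma(N)=\{e\}$ really enter. Choose $f$ with $f(x_0)=1$ and $f(sx_0)=0$. Then for every $a\in\cM$,
\[
\tau_0(a\lambda(s)^*)=\varphi(a\lambda(s)^*f)=\varphi\bigl(a\,(s^{-1}\cdot f)\lambda(s)^*\bigr)=\varphi\bigl((s^{-1}\cdot f)\lambda(s)^*a\bigr)=f(sx_0)\,\varphi(\lambda(s)^*a)=0 .
\]
The $\cM$-centrality of $\varphi$, which lets $a$ pass across $(s^{-1}\cdot f)\lambda(s)^*$, is exactly the ingredient with no counterpart in your proposal. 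Without it the freeness of the boundary action never touches $\cM$.
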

\begin{proof}
Let $\cM \le L(\Gamma)$ be an $N$-invariant amenable von Neumann subalgebra. Because $\cM$ is amenable, the set of $\cM$-hypertraces on $\mathbb{B}(\ell^2(\Gamma))$ whose restriction to $L(\Gamma)$ is the canonical trace $\tau_0$ is non-empty (see~\cite[Proposition~2.4]{amrutam2025amenable}). We denote this collection by $\text{Hype}_{\tau_0}(\cM)$. Using \cite[Lemma~5.2]{breuillard2017c}, the action of $N$ on its universal Furstenberg boundary $\partial_F N$ extends to a $\Gamma$-boundary action on $\partial_F N$ such that the action $\Gamma\curvearrowright\partial_FN$ is free~(\cite[Lemma~5.3]{breuillard2017c}). We can view $C(\partial_F N)$ as a subalgebra of multiplication operators inside $\mathbb{B}(\ell^2(\Gamma))$. Since $\cM$ is $N$-invariant, the restriction $\text{Hype}_{\tau_0}(\cM)|_{C(\partial_F N)}$ forms an $N$-invariant, weak*-closed, convex subset of $\text{Prob}(\partial_F N)$. Since the action $N \curvearrowright \text{Prob}(\partial_F N)$ is irreducible, we see that $\text{Hype}_{\tau_0}(\cM)|_{C(\partial_F N)} = \text{Prob}(\partial_F N)$. In particular, for every $x \in \partial_F N$, the Dirac measure $\delta_x$ is the restriction of some hypertrace $\varphi \in \text{Hype}_{\tau_0}(\cM)$.

To show that $\cM = \bC$, we take an arbitrary element $a \in \cM$ and show that for any $s \in \Gamma \setminus \{e\}$, $\tau_0(a \lambda(s)^*) = 0$. 

Fix $s \in \Gamma \setminus \{e\}$. Since $\Gamma\curvearrowright \partial_FN$ is free, $sx\ne x$ for all $x\in\partial_FN$. Fix $x_0\in\partial_FN$.
Since $s x_0 \neq x_0$, we choose a function $f \in C(\partial_F N)$ such that $f(x_0) = 1$, $f(s x_0) = 0$, and $0 \le f \le 1$. By the irreducibility established above, we can find an $\cM$-hypertrace $\varphi \in \text{Hype}_{\tau_0}(\cM)$ such that $\varphi|_{C(\partial_F N)} = \delta_{x_0}$.

Since $C(\partial_F N)$ falls in the multiplicative domain of $\varphi$, \(\varphi\) is an \(\cM\)-hypertrace; that is, \(\varphi(aT)=\varphi(Ta)\) for every \(T\in\mathbb{B}(\ell^2(\Gamma))\) (see~\cite[Proposition~2.4]{amrutam2025amenable}), we have
\begin{align*}
    \tau_0(a \lambda(s)^*) &= \varphi(a \lambda(s)^*) \\
    &= \varphi(a \lambda(s)^* f) \\
    &= \varphi(a (s^{-1} \cdot f) \lambda(s)^*) \\&=\varphi( (s^{-1} \cdot f) \lambda(s)^*a)\\&=f(sx_0)\varphi(\lambda(s)^*a)\\&=0.
\end{align*}
Thus, $\tau_0(a \lambda(s)^*) = 0$ for all $s \in \Gamma \setminus \{e\}$. This forces the Fourier support of $a$ to be concentrated solely at the identity, yielding $a \in \bC$. The proof is complete.
\end{proof}

\begin{cor}\label{cor:trivial_center}
Let $\Gamma$ be a countable discrete group and $N \trianglelefteq \Gamma$ be a $C^*$-simple normal subgroup such that $C_\Gamma(N) = \{e\}$. If $\cM \le L(\Gamma)$ is an $N$-invariant von Neumann subalgebra, then its center is trivial, i.e., $\mathcal{Z}(\cM) = \bC$.
\end{cor}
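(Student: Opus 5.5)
The plan is to reduce the corollary to Proposition~\ref{prop:amenable_subalgebra} by checking that the center $\mathcal{Z}(\cM)$ is itself an $N$-invariant amenable von Neumann subalgebra of $L(\Gamma)$.

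First, $\mathcal{Z}(\cM)=\cM\cap\cM'$ is a von Neumann subalgebra of $L(\Gamma)$. It contains the unit and is weak-operator closed, being the intersection of two von Neumann algebras. It is abelian, hence amenable (injective).

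Second, I will check $N$-invariance. Fix $n\in N$ and write $\mathrm{Ad}_n(x)=\lambda(n)x\lambda(n)^*$. This is a $*$-automorphism of $L(\Gamma)$, and by hypothesis it maps $\cM$ onto $\cM$. Surjectivity comes from applying the hypothesis to $n^{-1}$ as well, which gives $\mathrm{Ad}_n(\cM)=\cM$. Now let $z\in\mathcal{Z}(\cM)$ and $m\in\cM$. Write $m=\mathrm{Ad}_n(m')$ with $m'\in\cM$. Then
\[
\mathrm{Ad}_n(z)\,m=\mathrm{Ad}_n(zm')=\mathrm{Ad}_n(m'z)=m\,\mathrm{Ad}_n(z).
\]
So $\mathrm{Ad}_n(z)\in\cM\cap\cM'=\mathcal{Z}(\cM)$, and the center is $N$-invariant.

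Applying Proposition~\ref{prop:amenable_subalgebra} to $\mathcal{Z}(\cM)$ then gives $\mathcal{Z}(\cM)=\bC$. The hypotheses on $N$ ($C^*$-simplicity and $C_\Gamma(N)=\{e\}$) are exactly those of the proposition, so no further input is needed.

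The only delicate point is the surjectivity of $\mathrm{Ad}_n$ on $\cM$, and using that $N$ is a group settles it. I expect no real obstacle.
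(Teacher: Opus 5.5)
Your proposal is correct and follows the paper's proof: $\mathcal{Z}(\cM)$ is abelian, hence amenable, and it is $N$-invariant, so Proposition~\ref{prop:amenable_subalgebra} gives $\mathcal{Z}(\cM)=\bC$. Your explicit check that $\mathrm{Ad}_n$ maps $\cM$ onto $\cM$, and therefore preserves $\mathcal{Z}(\cM)$, spells out the step the paper calls ``clearly'' true.
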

\begin{proof}
The center $\mathcal{Z}(\cM)$ is an abelian, and hence amenable, von Neumann subalgebra of $L(\Gamma)$. Since $\cM$ is normalized by $N$, its center $\mathcal{Z}(\cM)$ is clearly also $N$-invariant. Applying Proposition~\ref{prop:amenable_subalgebra} directly yields $\mathcal{Z}(\cM) = \bC$.
\end{proof}

\subsection{Higher-Rank Lattices and Characters}
In Section~\ref{sec:lattice}, our focus shifts to irreducible lattices in higher-rank semisimple Lie groups, for example, $\Gamma = \mathrm{SL}_d(\mathbb{Z})$ with $d \geq 3$. We will utilize the following fundamental theorem regarding their normal subgroup structure.
\begin{theorem}[Margulis Normal Subgroup Theorem]\label{thm:margulis}
Let $\Gamma$ be an irreducible lattice in a higher-rank semisimple Lie group
with trivial center and no compact factors (such as $\mathrm{SL}_d(\mathbb{Z})$
for $d \geq 3$ odd). Then every normal subgroup of $\Gamma$ is either finite or
of finite index in $\Gamma$. In particular, when $\Gamma =
\mathrm{SL}_d(\mathbb{Z})$ with $d \geq 3$ odd, every \emph{non-trivial} normal
subgroup has finite index, since $\Gamma$ admits no non-trivial finite normal
subgroups.
\end{theorem}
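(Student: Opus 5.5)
The first assertion is the Margulis Normal Subgroup Theorem itself, and I would not reprove it. I would simply invoke Margulis' theorem for irreducible lattices in higher-rank semisimple Lie groups with trivial center and no compact factors. That reduces the statement to the specific claim for $\Gamma=\mathrm{SL}_d(\mathbb{Z})$ with $d\ge 3$ odd.

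The plan is to first check that $\mathrm{SL}_d(\mathbb{Z})$ fits the hypotheses. It is a non-cocompact irreducible lattice in $\mathrm{SL}_d(\mathbb{R})$, which has real rank $d-1\ge 2$ and no compact factors. Its center consists of the scalar matrices $\zeta I$ with $\zeta^d=1$ and $\zeta\in\{\pm 1\}$. Since $d$ is odd, $\det(-I)=-1$, so the center is trivial; this is exactly where the parity of $d$ enters. Hence Margulis gives that every normal subgroup $N\trianglelefteq\Gamma$ is finite or of finite index.

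It then remains to show that $\Gamma$ has no non-trivial finite normal subgroup $F$, and I expect this to be the only real step. The approach is as follows.
\begin{enumerate}
\item Since $F$ is finite and normal, $\Gamma$ acts on $F$ by conjugation through a finite group $\mathrm{Aut}(F)$. The kernel of this action is $C_\Gamma(F)$, so $C_\Gamma(F)$ has finite index in $\Gamma$.
\item By Borel density, or directly because $\mathrm{SL}_d(\mathbb{Z})$ is Zariski dense in the connected group $\mathrm{SL}_d$, any finite-index subgroup of $\Gamma$ is again Zariski dense in $\mathrm{SL}_d(\mathbb{R})$.
\item Each $f\in F$ commutes with the Zariski-dense subgroup $C_\Gamma(F)$. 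Since centralizers are Zariski closed, $f$ commutes with all of $\mathrm{SL}_d(\mathbb{R})$.
\item Therefore $f$ is central in $\mathrm{SL}_d(\mathbb{R})$, hence a scalar matrix, hence trivial for odd $d$ by the center computation above.
\end{enumerate}
This shows $F=\{e\}$, and so every non-trivial normal subgroup of $\Gamma$ has finite index.
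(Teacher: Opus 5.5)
Your proposal is correct and takes the same approach as the paper, which cites Margulis' theorem and then asserts without further argument that $\mathrm{SL}_d(\mathbb{Z})$ for odd $d$ has no non-trivial finite normal subgroups. Your centralizer argument correctly supplies the justification the paper leaves implicit: $C_\Gamma(F)$ has finite index and is therefore Zariski dense, so each $f\in F$ is central, hence scalar, hence trivial when $d$ is odd.
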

Furthermore, we will rely on characters and their connections to operator algebras in the higher-rank setup. A function $\phi: \Gamma \to \mathbb{C}$ is a \emph{character} if it is normalized ($\phi(e) = 1$), positive-definite, and constant on conjugacy classes of $\Gamma$. 
Following~\cite[Definition~3.1]{dudko2024character}, a countable group $G$ is
said to have the \emph{non-factorizable character decomposition property} (CDP)
if for any two characters $\phi, \psi \colon G \to \mathbb{C}$ -- that is,
normalized ($\phi(e)=\psi(e)=1$), positive-definite functions constant on
conjugacy classes of $G$ -- satisfying $\phi(s)\psi(s) = 0$ for all
$e \neq s \in G$, one has either $\phi \equiv \delta_e$ or $\psi \equiv
\delta_e$. Building on the operator-algebraic superrigidity of
Bekka~\cite{bekka2007operator}, the group $\mathrm{SL}_d(\mathbb{Z})$
($d \geq 3$ odd) has the
CDP~\cite[Proposition~3.17]{dudko2024character}. Moreover,
by~\cite[Proposition~3.2(1)]{dudko2024character}, every non-trivial normal
subgroup of a group with CDP again has the CDP. In particular, every
finite-index normal subgroup $N \trianglelefteq \mathrm{SL}_d(\mathbb{Z})$
inherits the CDP from $\Gamma$.
\section{A General Vanishing Principle}\label{sec:vanishing}

The arguments in both the acylindrically hyperbolic and higher-rank
lattice settings share a common analytic core: a mechanism for forcing
the conditional expectation $\mathbb{E}_{\cM}(\lambda(g))$ to vanish
for every non-identity group element, thereby concluding $\cM = \bC$.

Throughout this section, $\Gamma$ denotes a countable discrete group,
$N \trianglelefteq \Gamma$ a non-trivial normal subgroup, and
$\cM \leq L(\Gamma)$ an $N$-invariant von Neumann subalgebra.  With such $\cM$, we can associate a positive definite function $\phi_\cM \colon \Gamma \to [0,\infty)$, defined by
\[
  \phi_\cM(g)
  := \tau_0\!\left(\mathbb{E}_\cM(\lambda(g))\,\lambda(g)^*\right).
\]
We record the basic, well-known properties of this function for our later use. 
\begin{lemma}\label{lem:phi_properties}
Let $\cM \leq L(\Gamma)$ be an $N$-invariant von Neumann subalgebra.
\begin{enumerate}[label={\rm(\arabic*)}]
\item $\phi_\cM(g) = \|\mathbb{E}_\cM(\lambda(g))\|_2^2$ for all
  $g \in \Gamma$. In particular, $\phi_\cM(g) \geq 0$,
  $\phi_\cM(e) = 1$, and $\phi_\cM(g) = 0$ if and only if
  $\mathbb{E}_\cM(\lambda(g)) = 0$.
\item $\phi_\cM(g^{-1}) = \phi_\cM(g)$ for all $g \in \Gamma$.
\item $\phi_\cM$ is $N$-invariant: $\phi_\cM(ngn^{-1}) = \phi_\cM(g)$
  for all $n \in N$ and $g \in \Gamma$.
\item $\phi_\cM$ is a positive-definite function on $\Gamma$.
\end{enumerate}
\end{lemma}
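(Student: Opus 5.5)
We prove the four items directly from the defining properties of the trace-preserving conditional expectation $\mathbb{E}_\cM\colon L(\Gamma)\to\cM$: it is $\cM$-bimodular, $\tau_0$-preserving, and equals the orthogonal projection $e_\cM$ of $\ell^2(\Gamma)=L^2(L(\Gamma),\tau_0)$ onto $L^2(\cM)$ when restricted to $L(\Gamma)\subseteq L^2$. The only external ingredient is the compatibility of $\mathbb{E}_\cM$ with $N$-conjugation, which follows from $N$-invariance of $\cM$.

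For (1), write $x=\lambda(g)$. Using $\mathbb{E}_\cM(x)\in\cM$, bimodularity and $\tau_0\circ\mathbb{E}_\cM=\tau_0$, we get
\[
\tau_0(\mathbb{E}_\cM(x)x^*)=\tau_0\bigl(\mathbb{E}_\cM(\mathbb{E}_\cM(x)x^*)\bigr)=\tau_0\bigl(\mathbb{E}_\cM(x)\mathbb{E}_\cM(x)^*\bigr)=\|\mathbb{E}_\cM(x)\|_2^2 .
\]
Here we also use $\mathbb{E}_\cM(x^*)=\mathbb{E}_\cM(x)^*$. Nonnegativity is then immediate, and faithfulness of $\tau_0$ gives the vanishing criterion. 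Finally, $\phi_\cM(e)=\|1\|_2^2=1$ because $\cM$ is unital.

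For (2), combine (1) with $\mathbb{E}_\cM(\lambda(g^{-1}))=\mathbb{E}_\cM(\lambda(g))^*$ and the fact that $\|y^*\|_2=\|y\|_2$ in a tracial algebra.

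For (3), the key observation is that for $n\in N$ the automorphism $\mathrm{Ad}\,\lambda(n)$ preserves $\cM$ and $\tau_0$. Therefore $\mathrm{Ad}\,\lambda(n)\circ\mathbb{E}_\cM\circ\mathrm{Ad}\,\lambda(n)^{-1}$ is again a $\tau_0$-preserving conditional expectation onto $\cM$. By uniqueness of such an expectation it equals $\mathbb{E}_\cM$, so
\[
\mathbb{E}_\cM(\lambda(ngn^{-1}))=\lambda(n)\,\mathbb{E}_\cM(\lambda(g))\,\lambda(n)^*,
\]
and unitary invariance of $\|\cdot\|_2$ finishes the item.

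For (4), fix $g_1,\dots,g_k\in\Gamma$ and $c_1,\dots,c_k\in\bC$. Since $\mathbb{E}_\cM=e_\cM$ on $L(\Gamma)$, we can write $\phi_\cM(g)=\langle e_\cM\delta_g,\delta_g\rangle$. The intended route is to show that the kernel $(g,h)\mapsto\phi_\cM(h^{-1}g)$ is positive. The natural candidate identity is
\[
\sum_{i,j}c_i\overline{c_j}\,\phi_\cM(g_j^{-1}g_i)=\tau_0\Bigl(\sum_{i,j}c_i\overline{c_j}\,\mathbb{E}_\cM(\lambda(g_j^{-1}g_i))\lambda(g_j^{-1}g_i)^*\Bigr),
\]
and one would then try to recognize the right-hand side as a vector state of a positive operator. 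This identification is the main obstacle, since $\phi_\cM(h^{-1}g)$ is not obviously a Gram-matrix entry.

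The approach I would try first uses the Jones basic construction and Stinespring-type arguments. On $\ell^2(\Gamma)\otimes\ell^2(\Gamma)$, consider the vectors $\xi_g=\sum_x(e_\cM\delta_g)(x)\,\delta_x\otimes\delta_{g^{-1}x}$ or similar. More standardly, I would use the completely positive map $\mathbb{E}_\cM$ and define $\Phi(\lambda(g))=\mathbb{E}_\cM(\lambda(g))$, the Schur-type multiplier associated with $\phi_\cM$. This reduces (4) to the known statement, cited as well known, that the function $g\mapsto\tau_0(\mathbb{E}_\cM(\lambda(g))\lambda(g)^*)$ is positive definite for any von Neumann subalgebra $\cM$. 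That statement is proven via the positive kernel $\langle e_\cM(\delta_{g}\ast\cdot),\cdot\rangle$ on the diagonal coefficients of the completely positive map $\mathbb{E}_\cM$. Should a direct argument fail, I would invoke this standard fact, as the paper labels these properties well known.
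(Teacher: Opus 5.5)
Your arguments for (1)--(3) are correct and more explicit than the paper, which simply cites \cite{jiang2024example} for the whole lemma. In particular, the uniqueness-of-the-$\tau_0$-preserving-expectation argument for (3) is the right one.

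The gap is (4), which you never actually prove. The vector you propose satisfies $\langle\xi_g,\xi_h\rangle=\delta_{g,h}\,\phi_\cM(g)$, which is a diagonal Gram matrix, not $\phi_\cM(h^{-1}g)$. The map $\mathbb{E}_\cM$ is not the multiplier attached to $\phi_\cM$, since $\mathbb{E}_\cM(\lambda(g))$ is in general not a multiple of $\lambda(g)$. Finally, ``reducing'' (4) to the fact that $g\mapsto\tau_0(\mathbb{E}_\cM(\lambda(g))\lambda(g)^*)$ is positive definite is circular, because that statement is (4). Falling back on the citation matches the paper, but then your self-contained argument stops at the only non-formal item.

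The missing idea is to realize $\phi_\cM(h^{-1}g)$ as a Gram matrix in the Jones basic construction. Let $e=e_\cM$ and let $\mathrm{Tr}$ be the canonical semifinite trace on $\langle L(\Gamma),e\rangle$, so that $\mathrm{Tr}(xey)=\tau_0(xy)$ and $exe=\mathbb{E}_\cM(x)e$ for $x,y\in L(\Gamma)$. Set $p_g:=\lambda(g)e\lambda(g)^*$, a projection with $\mathrm{Tr}(p_g)=1$. Then
\[
\mathrm{Tr}(p_hp_g)=\mathrm{Tr}\bigl(\lambda(h)\,\mathbb{E}_\cM(\lambda(h^{-1}g))\,e\,\lambda(g)^*\bigr)=\tau_0\bigl(\mathbb{E}_\cM(\lambda(h^{-1}g))\,\lambda(h^{-1}g)^*\bigr)=\phi_\cM(h^{-1}g),
\]
hence $\sum_{i,j}c_i\overline{c_j}\,\phi_\cM(g_j^{-1}g_i)=\mathrm{Tr}(x^*x)\ge 0$ for $x=\sum_i c_ip_{g_i}$. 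Equivalently, $\pi(g)=\mathrm{Ad}\,\lambda(g)$ acting on $L^2(\langle L(\Gamma),e\rangle,\mathrm{Tr})$, with vector $e$, satisfies $\langle\pi(g)e,\pi(h)e\rangle=\phi_\cM(h^{-1}g)$.

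There is also an alternative via Haagerup's trick, which is presumably what your multiplier remark was aiming at. Compose $\lambda(g)\mapsto\lambda(g)\otimes\lambda(g)$, then $\mathbb{E}_\cM\otimes\mathrm{id}$, then the trace-preserving expectation onto the diagonal copy of $L(\Gamma)$ in $L(\Gamma\times\Gamma)$. This gives a normal u.c.p.\ map $\lambda(g)\mapsto\phi_\cM(g)\lambda(g)$, and a completely positive Fourier multiplier has positive-definite symbol. Neither argument uses $N$-invariance.
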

\begin{proof}
See~\cite[Proposition~3.2]{jiang2024example}.
\end{proof}
Since $\phi_\cM$ is positive-definite with $\phi_\cM(e) = 1$, we apply the GNS construction to obtain a unitary representation
$(\pi,\mathcal{H},\xi)$ of $\Gamma$ with a unit cyclic vector $\xi$
satisfying
\begin{equation}\label{eq:GNS}
  \bigl\langle \pi(g)\,\xi,\,\pi(h)\,\xi \bigr\rangle_{\mathcal{H}}
  = \phi_\cM(h^{-1}g)
  \qquad\text{for all }g,h\in\Gamma.
\end{equation}
In particular, $\langle \pi(g)\xi, \xi\rangle = \phi_\cM(g)$ and
$\|\pi(g)\xi\| = 1$ for all $g \in \Gamma$.

Recall that the Fourier support of $a \in L(\Gamma)$ is
$\operatorname{Supp}(a) = \{g \in \Gamma : c_g(a) \neq 0\}$
where $c_g(a) := \tau_0(a\lambda(g)^*)$.

We now introduce the two main tools of this section. The first is a
pointwise criterion, the \emph{Support Principle}, which translates a
Fourier-support assumption directly into the vanishing of
$\mathbb{E}_\cM(\lambda(g))$ for loxodromic elements. The second is an
abstract Bessel-type argument, formalized via the notion of thickness,
which propagates this vanishing to all of $\Gamma \setminus \{e\}$.
\begin{lemma}[Support Principle]\label{lem:support_principle}
Let $\Gamma$ be a torsion-free acylindrically hyperbolic group and let
$\cM \leq L(\Gamma)$ be a von Neumann subalgebra. Let $g\in\Gamma$ be a loxodromic element. Suppose that
$\operatorname{Supp}(a)$ does not contain $g$ for 
any $a \in \cM$. Then $\mathbb{E}_\cM(\lambda(g)) = 0$.
\end{lemma}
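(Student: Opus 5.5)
The plan is to show that the Fourier coefficient of $\mathbb{E}_\cM(\lambda(g))$ at $g$ vanishes, and then feed this into the identity of Lemma~\ref{lem:phi_properties}(1). The element $x := \mathbb{E}_\cM(\lambda(g))$ lies in $\cM$. By the standing hypothesis, $g \notin \operatorname{Supp}(x)$, so
\[
c_g(x) = \tau_0\bigl(\mathbb{E}_\cM(\lambda(g))\,\lambda(g)^*\bigr) = 0 .
\]
The left-hand side is exactly $\phi_\cM(g)$.

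Next I would verify, independently of any invariance, that
\[
\phi_\cM(g) = \|\mathbb{E}_\cM(\lambda(g))\|_2^2 .
\]
This uses the fact that $\mathbb{E}_\cM$ is the trace-preserving conditional expectation, which is the orthogonal projection of $\ell^2\Gamma$ onto $L^2(\cM)$. Hence
\[
\tau_0(x\lambda(g)^*) = \langle x, \lambda(g)\rangle_{\tau_0} = \langle \mathbb{E}_\cM(x), \lambda(g)\rangle = \langle x, \mathbb{E}_\cM(\lambda(g))\rangle = \|x\|_2^2 ,
\]
using that $\mathbb{E}_\cM$ is idempotent and self-adjoint in $L^2$. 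Combining the two displays gives $\|x\|_2 = 0$. Faithfulness of $\tau_0$ then forces $x = 0$, i.e.\ $\mathbb{E}_\cM(\lambda(g)) = 0$.

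The loxodromic and torsion-free hypotheses play no role in this argument; they only reflect the setting in which the lemma is later applied. There is no real obstacle. The only point needing care is the sign/adjoint convention in $c_g(a) = \tau_0(a\lambda(g)^*)$, so that the coefficient of $x$ at $g$ (rather than at $g^{-1}$) is identified with $\phi_\cM(g)$. Should the convention instead produce the coefficient at $g^{-1}$, I would pass to $x^*\in\cM$, whose coefficient at $g$ is the conjugate of the coefficient of $x$ at $g^{-1}$, so the same conclusion follows.
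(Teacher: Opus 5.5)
Your proof is correct and is essentially the paper's argument: $\mathbb{E}_\cM(\lambda(g))\in\cM$ has vanishing Fourier coefficient at $g$, and that coefficient equals $\|\mathbb{E}_\cM(\lambda(g))\|_2^2$. Your direct derivation of this identity from the $L^2$-projection property of $\mathbb{E}_\cM$ is a useful detail. The lemma concerns an arbitrary (not necessarily $N$-invariant) subalgebra, and the paper instead cites Lemma~\ref{lem:phi_properties}(1), which is stated under $N$-invariance; as you note, the loxodromic and torsion-free hypotheses play no role.
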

\begin{proof}
Let $g \in \Gamma$ be loxodromic. The element
$\mathbb{E}_\cM(\lambda(g))$ lies in $\cM$, so by hypothesis
$g \notin \operatorname{Supp}(\mathbb{E}_\cM(\lambda(g)))$. Therefore,
\[\|\mathbb{E}_\cM(\lambda(g))\|_2^2= \tau_0\!\left(\mathbb{E}_\cM(\lambda(g))\,\lambda(g)^*\right) = 0.
\]
\end{proof}
\begin{remark}
Assume further that $\operatorname{Supp}(a)$ does not contain $g^k$ for 
any $a \in \cM$ and $k\in\mathbb{Z}\setminus\{0\}$. Then the same conclusion extends to all non-zero powers of $g$. Indeed, we observe that since
$g$ is loxodromic, it acts on the Gromov-hyperbolic space $S$ with
exactly two fixed points $\{g^+,g^-\}\subset\partial S$. Every non-zero
power $g^k$ ($k\neq 0$) shares the same fixed-point pair and is
therefore also loxodromic. Thus, the hypothesis applies equally to each
$g^k$, and the argument above gives $\mathbb{E}_\cM(\lambda(g^k)) = 0$
for all $k \neq 0$.    
\end{remark}
\begin{remark}\label{rem:hyperbolic_trivial}
In a torsion-free hyperbolic group, every non-trivial
element is loxodromic. Hence, in that setting, the hypothesis of the
Support Principle immediately yields $\mathbb{E}_\cM(\lambda(g)) = 0$
for every $g \in \Gamma\setminus\{e\}$, and therefore $\cM = \bC$,
without any further argument. In the broader acylindrically hyperbolic
setting, where infinite-order elliptic elements may exist, the Support
Principle handles loxodromic elements only, and the GNS Vanishing Lemma
below is required to complete the argument.
\end{remark}
The Support Principle handles loxodromic elements individually. To
upgrade this to a statement about all of $\Gamma \setminus \{e\}$, we
need a mechanism for propagating vanishing across the group. This is
achieved through the following notion, which captures the idea that
conjugates of any group element by elements of $N$ are sufficiently
spread out inside a prescribed set.
\begin{definition}\label{def:thick}
Let $N \trianglelefteq \Gamma$ be a non-trivial normal subgroup and
$\cM \leq L(\Gamma)$ an $N$-invariant von Neumann subalgebra. A
non-empty subset $K \subseteq \Gamma \setminus \{e\}$ is called
\emph{$(\cM, N)$-thick} if for every $g \in \Gamma \setminus \{e\}$,
there exists an infinite sequence $\{n_i\}_{i=1}^\infty \subset N$
such that
\[
w_{i,j} := n_j^{-1}\,g\,n_j \cdot n_i^{-1}\,g^{-1}\,n_i \in K
\qquad \text{for all } i \neq j.
\]
\end{definition}
\begin{lemma}[GNS Vanishing Lemma]\label{lem:GNS_vanishing}
Let $\Gamma$ be a countable discrete group, $N \trianglelefteq \Gamma$
a non-trivial normal subgroup, and $\cM \leq L(\Gamma)$ an $N$-invariant
von Neumann subalgebra. Suppose there exists an $(\cM, N)$-thick subset
$K \subseteq \Gamma \setminus \{e\}$ such that
$\mathbb{E}_\cM(\lambda(s)) = 0$ for every $s \in K$. Then $\phi_\cM
\equiv 0$ on $\Gamma \setminus \{e\}$. Consequently, $\cM = \bC$.
\end{lemma}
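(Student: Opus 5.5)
Fix $g \in \Gamma\setminus\{e\}$ and choose the sequence $\{n_i\}\subset N$ given by $(\cM,N)$-thickness of $K$. The idea is that the vectors $\pi(n_i^{-1}g^{-1}n_i)\xi$ in the GNS space of \eqref{eq:GNS} are pairwise orthogonal, while each has the same inner product $\phi_\cM(g)$ with $\xi$. A Bessel-type inequality then forces this common inner product to vanish.

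\textbf{Orthogonality.} For $i\neq j$, \eqref{eq:GNS} gives
\[
\bigl\langle \pi(n_i^{-1}g^{-1}n_i)\xi,\ \pi(n_j^{-1}g^{-1}n_j)\xi\bigr\rangle
= \phi_\cM\bigl(n_j^{-1}g\,n_j\, n_i^{-1}g^{-1}n_i\bigr)=\phi_\cM(w_{i,j}).
\]
Since $w_{i,j}\in K$, the hypothesis gives $\mathbb{E}_\cM(\lambda(w_{i,j}))=0$. Lemma~\ref{lem:phi_properties}(1) then yields $\phi_\cM(w_{i,j})=0$. So the unit vectors $\eta_i:=\pi(n_i^{-1}g^{-1}n_i)\xi$ form an orthonormal family.

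\textbf{Constant inner product with $\xi$.} By Lemma~\ref{lem:phi_properties}(2),(3),
\[
\langle \eta_i,\xi\rangle=\phi_\cM(n_i^{-1}g^{-1}n_i)=\phi_\cM(g^{-1})=\phi_\cM(g).
\]
Bessel's inequality then gives $\sum_i |\phi_\cM(g)|^2\le \|\xi\|^2=1$. The sum has infinitely many equal terms, so $\phi_\cM(g)=0$. By Lemma~\ref{lem:phi_properties}(1), this means $\mathbb{E}_\cM(\lambda(g))=0$ for every $g\neq e$.

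\textbf{Concluding $\cM=\bC$.} Take $x\in\cM$ and expand it in $L^2$ as $x=\sum_g c_g(x)\lambda(g)$. Since $\mathbb{E}_\cM$ is $\|\cdot\|_2$-continuous and fixes $x$,
\[
x=\mathbb{E}_\cM(x)=\sum_g c_g(x)\,\mathbb{E}_\cM(\lambda(g))=c_e(x)\cdot 1,
\]
with convergence in $\|\cdot\|_2$. Hence $x\in\bC$.

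The main point requiring care is the indexing in \eqref{eq:GNS}, namely $\langle\pi(a)\xi,\pi(b)\xi\rangle=\phi_\cM(b^{-1}a)$, so that the product matches $w_{i,j}$ exactly. Here $b^{-1}a=(n_j^{-1}g^{-1}n_j)^{-1}n_i^{-1}g^{-1}n_i=n_j^{-1}gn_j\,n_i^{-1}g^{-1}n_i$, which agrees with the definition. The remaining steps are routine.
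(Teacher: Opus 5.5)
Your proof is correct and follows the paper's argument essentially step for step. It uses the same unit vectors $\pi(n_i^{-1}g^{-1}n_i)\xi$, orthogonality from $\phi_\cM(w_{i,j})=0$, the constant inner product $\phi_\cM(g)$ with $\xi$ via Lemma~\ref{lem:phi_properties}(2),(3), and Bessel's inequality, while your $\|\cdot\|_2$-expansion simply spells out the final step ``consequently $\cM=\bC$'' that the paper leaves implicit.
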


\begin{proof}
Fix $g \in \Gamma \setminus \{e\}$. Since $K$ is $(\cM, N)$-thick,
there exists an infinite sequence $\{n_i\}_{i \geq 1} \subset N$ such
that $w_{i,j} \in K$ for all $i \neq j$. Let $(\pi, \mathcal{H}, \xi)$
be the GNS triple of $\phi_\cM$ from~\eqref{eq:GNS}. Define
\[
  v_i := \pi\!\left(n_i^{-1}\,g^{-1}\,n_i\right)\xi
  \in \mathcal{H}, \qquad i \geq 1.
\]
Since $\pi$ is unitary and $\|\xi\|=1$, we have $\|v_i\|=1$ for all
$i$. For $i \neq j$, the GNS formula~\eqref{eq:GNS} gives
\begin{align*}
  \langle v_i, v_j \rangle
  &= \phi_\cM\!\left(
     \bigl(n_j^{-1}g^{-1}n_j\bigr)^{-1}
     \bigl(n_i^{-1}g^{-1}n_i\bigr)\right)
  = \phi_\cM(w_{i,j}).
\end{align*}
Since $w_{i,j} \in K$ and $\mathbb{E}_\cM(\lambda(w_{i,j})) = 0$,
Lemma~\ref{lem:phi_properties}(1) gives $\phi_\cM(w_{i,j}) = 0$,
so $\langle v_i, v_j \rangle = 0$. By
Lemma~\ref{lem:phi_properties}(2) and~(3),
\[
  |\langle v_i, \xi \rangle|^2
  = \phi_\cM(n_i^{-1}g^{-1}n_i)^2
  = \phi_\cM(g)^2
  \qquad \text{for all } i \geq 1.
\]
Bessel's inequality applied to the orthonormal sequence
$\{v_i\}_{i=1}^k$ gives
\[
  k\,\phi_\cM(g)^2
  = \sum_{i=1}^{k}|\langle v_i, \xi\rangle|^2
  \leq \|\xi\|^2 = 1
  \qquad\text{for all }k \geq 1.
\]
Letting $k\to\infty$ forces $\phi_\cM(g) = 0$. Since $g \in \Gamma
\setminus \{e\}$ was arbitrary, Lemma~\ref{lem:phi_properties}(1)
gives $\mathbb{E}_\cM(\lambda(g)) = 0$ for all $g \neq e$, and
consequently $\cM = \bC$.
\end{proof}
It remains to verify that the set of loxodromic elements of $N$ is
indeed $(\cM, N)$-thick in the acylindrically hyperbolic setting. This
is the content of the following proposition, which is the key geometric
input of this section. 
\begin{prop}\label{prop:w_ij_loxodromic}
Let $\Gamma$ be a torsion-free acylindrically hyperbolic group with
trivial amenable radical, and let $N \trianglelefteq \Gamma$ be a
non-trivial normal subgroup. For every $g \in \Gamma \setminus \{e\}$,
there exists an infinite sequence $\{n_i\}_{i \geq 1} \subset N$ of
loxodromic elements such that
\[
  w_{i,j} = n_j^{-1}\,g\,n_j \cdot n_i^{-1}\,g^{-1}\,n_i
\]
is loxodromic for all $i \neq j$.
\end{prop}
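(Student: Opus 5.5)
We need, for every $g\ne e$, a sequence $n_i\in N$ of loxodromic elements such that each $w_{i,j}=n_j^{-1}gn_j\,n_i^{-1}g^{-1}n_i$ is loxodromic. Rewriting, $w_{i,j}$ is conjugate (by $n_j$) to
\[
g\,(n_in_j^{-1})^{-1}g^{-1}(n_in_j^{-1}) = [g^{-1}, h_{ij}]\quad\text{with } h_{ij}:=n_in_j^{-1}\in N,
\]
up to the convention for commutators. Loxodromicity is conjugation-invariant, so it suffices to produce $n_i$ such that every commutator $g h_{ij}^{-1} g^{-1} h_{ij}$ is loxodromic. The plan is to build the $n_i$ inside a free subgroup of $N$ on which the action is well controlled, arranged so that $g$ and the $h_{ij}$ play ping-pong.

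\textbf{Step 1 (free subgroup in $N$ transverse to $g$).} By Lemma~\ref{lem:free_in_normal} and its proof, $N$ is non-elementary acylindrically hyperbolic. It therefore contains a hyperbolically embedded free subgroup $F=\langle a,b\rangle\cong\mathbb{F}_2$ all of whose non-trivial elements are loxodromic. We choose $F$ so that its limit set in $\partial S$ is disjoint from a neighbourhood of the fixed points of $g$ (if $g$ is loxodromic), or, in general, so that $g F g^{-1}\cap F=\{e\}$ and $g\notin F$. This is possible because $F$ is almost malnormal and there are infinitely many pairwise independent choices, as in the selection of $g^{(k)}$ in Lemma~\ref{lem:free_in_normal}. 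Since $\Gamma$ is torsion-free, almost malnormality gives genuine malnormality for those $g$ with $g\notin F$.

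\textbf{Step 2 (choice of the sequence).} Set $n_i:=a^{m_i}$ for a rapidly increasing sequence $m_i$, or $n_i:=a^{i}b a^{-i}$ so that the differences $h_{ij}=n_in_j^{-1}$ are non-trivial reduced words in $F$, hence loxodromic with fixed points close to those of $a$ or of the $n_i$. The aim is a ping-pong configuration. The element $h_{ij}$ has fixed points $h_{ij}^{\pm}$, and $gh_{ij}^{-1}g^{-1}$ has fixed points $g h_{ij}^{\mp}$. Two loxodromic elements with disjoint fixed-point pairs generate, after passing to high powers, a free group in which the product $x^{-1}y$ is loxodromic. To avoid needing high powers, we instead use the malnormality from Step 1: if $\{gh^{\pm}\}\cap\{h^{\pm}\}\neq\emptyset$ for some $h\in F\setminus\{e\}$, then $E(ghg^{-1})\cap E(h)$ is non-trivial. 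Since $E(h)\subset F$ in the hyperbolically embedded setting, this forces $gFg^{-1}\cap F\ne\{e\}$, a contradiction.

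\textbf{Step 3 (loxodromicity of the commutator; main obstacle).} With independence in hand, we must show that $c=(gh^{-1}g^{-1})\,h$ is loxodromic for $h=h_{ij}$. This is the step I expect to be hardest: independence of two loxodromics does not by itself make their product loxodromic unless their translation lengths dominate the Gromov product of their axes. The first attempt is to take $m_i$ growing fast, so that the translation length of $h_{ij}$ tends to infinity while the "overlap" of the quasi-axes of $h$ and $ghg^{-1}$ stays bounded uniformly in $i,j$. The overlap is bounded by acylindricity, since long fellow-travelling with both $h$ and $g h g^{-1}$ is controlled by a constant depending only on $g$ and $F$. A standard lemma then applies: if $x,y$ are loxodromic with $\tau(x),\tau(y)>$ (overlap) $+\,C\delta$, then $xy$ is loxodromic (cf.\ the ping-pong in \cite{dahmani2017hyperbolically}). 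If the uniformity fails, the fallback is to replace the action on $S$ by the relatively hyperbolic structure of $\Gamma$ relative to $F$. There, as in \cite[Lemma~7]{arzhantseva2006relatively}, words alternating between $g$ and long elements of the peripheral $F$ are loxodromic, with $g\notin F$ playing the role of a non-peripheral letter.
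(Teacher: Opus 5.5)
Your argument, taken along the branch $n_i=a^{m_i}$ with all gaps $|m_i-m_j|$ large, is essentially the paper's proof. The paper picks a loxodromic $h\in N$ with $g\notin E_\Gamma(h)$, which gives independence of $h$ and $ghg^{-1}$ directly, so no hyperbolically embedded free subgroup or malnormality is needed. It then sets $n_i=h^{iM}$ and notes that $w_{i,j}$ is conjugate to $\alpha^k\beta^k$ with $\alpha=h^{-1}$, $\beta=ghg^{-1}$, $k=(j-i)M$, which is loxodromic for $k\geq M$ by \cite[Lemma~1.2]{AD19}; your $gh_{ij}^{-1}g^{-1}h_{ij}$ is exactly the inverse of this element.

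Your uniformity worry in Step~3 is vacuous on this branch. All $h_{ij}$ are powers of the single element $a$, so the relevant overlap is that of the fixed axes of $a$ and $gag^{-1}$, a single constant bounded by their independence rather than by acylindricity. You can therefore drop the alternative $n_i=a^iba^{-i}$ and the relatively hyperbolic fallback.
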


\begin{proof}
Since $N$ is a non-trivial normal subgroup of a non-elementary
acylindrically hyperbolic group with trivial amenable radical, $N$ is
itself non-elementary and acylindrically hyperbolic
by~\cite[Corollary~1.5]{osin2016acylindrically}.
Since $N$ contains infinitely many pairwise independent loxodromic
elements, choose one, call it $h \in N$. Since $\Gamma$ is torsion-free, the maximal elementary subgroup satisfies
$E_\Gamma(h)=\mathrm{Stab}_\Gamma(\{x_{h^+},x_{h^-}\})$. Write $h=h_0^n$ for some primitive element $h_0\in\Gamma$. 
If $g$ is loxodromic, then $g \in E_\Gamma(h) = \langle h_0 \rangle$
only if $g$ and $h$ generate the same maximal cyclic subgroup, i.e.,
$\{x_g^+, x_g^-\} = \{x_h^+, x_h^-\}$. Since $N$ contains infinitely many
pairwise independent loxodromic elements, at most one of them can share
the fixed-point pair $\{x_g^+, x_g^-\}$ with $g$. We therefore choose $h
\in N$ loxodromic with $\{x_h^+, x_h^-\} \cap \{x_g^+, x_g^-\}=\emptyset$.

Assume now that $g$ is elliptic.
We claim $g \notin E_\Gamma(h)$. Indeed, if
$g \in E_\Gamma(h)=\langle h_0\rangle$, then $g$ is either the identity
or a non-zero power of $h_0$, which
contradicts the assumption that $g$ is elliptic. 

Let
$\alpha := h^{-1}$ and $\beta := ghg^{-1}$. Both are loxodromic, with
fixed-point pairs $\{x_h^+,x_h^-\}$ and $\{gx_{h}^+,gx_h^-\}$ respectively, and
these pairs are disjoint. Hence $\alpha$ and $\beta$ are independent
loxodromic elements.

Using~\cite[Lemma~1.2]{AD19}, since $\alpha$ and
$\beta$ are independent loxodromic elements, there exists $M \in \bN$
such that for all integers $k \geq M$, the product $\alpha^k\beta^k$ is
loxodromic. Observe that
\[
  \alpha^k\beta^k
  = h^{-k}\cdot(ghg^{-1})^k
  = h^{-k}\cdot gh^kg^{-1}
  = h^{-k}gh^kg^{-1}.
\]
Hence $h^{-k}gh^kg^{-1}$ is loxodromic for every $k \geq M$.
Define $n_i := h^{iM} \in N$ for $i \geq 1$. Since $h \in N$ and $N$
is a subgroup, $n_i \in N$ for all $i \geq 1$. Moreover, each $n_i$ is
loxodromic, as it is a non-zero power of the loxodromic element $h$. Fix $i \neq j$. Without loss of generality, assume $j > i$. We see that
\begin{align*}
  w_{i,j}
  = n_j^{-1}\,g\,n_j \cdot n_i^{-1}\,g^{-1}\,n_i 
  = h^{-jM}\,g\,h^{jM} \cdot h^{-iM}\,g^{-1}\,h^{iM}
  &= h^{-jM}\,g\,h^{(j-i)M}\,g^{-1}\,h^{iM}.
\end{align*}
Setting $k := (j-i)M \geq M$ (since $j > i$), we rewrite this as
\[
  w_{i,j}
  = h^{-iM}\cdot\bigl(h^{-k}\,g\,h^k\,g^{-1}\bigr)\cdot h^{iM}.
\]
The element in parentheses is $h^{-k}gh^kg^{-1}$, which is loxodromic since $k \geq M$. Since $w_{i,j}$ is a conjugate of a
loxodromic element by $h^{iM}$, and loxodromicity is invariant under
conjugation, $w_{i,j}$ is loxodromic. If instead $i > j$, then
$w_{i,j} = w_{j,i}^{-1}$, which is again loxodromic since
loxodromicity is closed under taking inverses. In either case, since $N$ is normal, $gh^kg^{-1}\in N$ and hence,
$w_{i,j}$ is a loxodromic element of $N$.
\end{proof}
With the thickness of the loxodromic set established, the two main
corollaries of this section follow by combining the Support Principle
with the GNS Vanishing Lemma. The first handles the von Neumann
algebraic setting directly.
\begin{cor}\label{cor:trivial_lox_support}
Let $\Gamma$ be a torsion-free acylindrically hyperbolic group with
trivial amenable radical, $N \trianglelefteq \Gamma$ a non-trivial
normal subgroup, and $\cM \leq L(\Gamma)$ an $N$-invariant von Neumann
subalgebra. If $\operatorname{Supp}(a)$ contains no loxodromic element
of $N$ for any $a \in \cM$, then $\cM = \bC$.
\end{cor}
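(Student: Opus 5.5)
The plan is to combine the Support Principle (Lemma~\ref{lem:support_principle}) with the GNS Vanishing Lemma (Lemma~\ref{lem:GNS_vanishing}). For the latter, the thick set will be
\[
K := \{ s \in N : s \text{ is loxodromic} \} \subseteq \Gamma\setminus\{e\}.
\]
Loxodromic elements have infinite order, so $e \notin K$, and $K$ is non-empty because $N$ is non-elementary acylindrically hyperbolic by \cite[Corollary~1.5]{osin2016acylindrically}.

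\textbf{Step 1: vanishing on $K$.} Fix $s \in K$. By hypothesis, no $a \in \cM$ has $s$ in its Fourier support, since $s$ is a loxodromic element of $N$. Applying Lemma~\ref{lem:support_principle} to the loxodromic element $s$ gives $\mathbb{E}_\cM(\lambda(s)) = 0$. Concretely, $\mathbb{E}_\cM(\lambda(s)) \in \cM$, so its Fourier coefficient at $s$ is zero, and
\[
\|\mathbb{E}_\cM(\lambda(s))\|_2^2 = \tau_0\bigl(\mathbb{E}_\cM(\lambda(s))\lambda(s)^*\bigr) = 0 .
\]
Thus $\mathbb{E}_\cM(\lambda(s)) = 0$ for every $s \in K$.

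\textbf{Step 2: $K$ is $(\cM,N)$-thick.} Fix $g \in \Gamma\setminus\{e\}$. Proposition~\ref{prop:w_ij_loxodromic} provides an infinite sequence $\{n_i\}\subset N$ such that every $w_{i,j} = n_j^{-1} g n_j \cdot n_i^{-1} g^{-1} n_i$ with $i\neq j$ is loxodromic. Each $w_{i,j}$ also lies in $N$, by normality:
\[
w_{i,j} = n_j^{-1} g\, n_j n_i^{-1} g^{-1} \, n_i ,
\]
and $g (n_j n_i^{-1}) g^{-1} \in N$. Hence $g n_j n_i^{-1} g^{-1}\in N$, and multiplying by $n_j^{-1}$ on the left and $n_i$ on the right keeps us in $N$. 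So $w_{i,j} \in K$ for all $i \neq j$, which is exactly Definition~\ref{def:thick}.

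\textbf{Step 3: conclusion.} Steps 1 and 2 verify the hypotheses of Lemma~\ref{lem:GNS_vanishing}. That lemma gives $\phi_\cM \equiv 0$ on $\Gamma\setminus\{e\}$, so $\mathbb{E}_\cM(\lambda(g)) = 0$ for all $g \neq e$. Since $\mathbb{E}_\cM$ is normal and $\lambda(\Gamma)$ spans a weakly dense subalgebra, $\cM = \mathbb{E}_\cM(L(\Gamma)) = \bC$.

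The only real content is Step 2, namely that the sequence from Proposition~\ref{prop:w_ij_loxodromic} produces $w_{i,j}$ inside $N$ and not merely loxodromic in $\Gamma$. The normality computation above handles this, so I expect no genuine obstacle beyond invoking that proposition correctly.
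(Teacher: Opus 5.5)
Your proposal is correct and follows essentially the same route as the paper. You take $K$ to be the loxodromic elements of $N$, get $\mathbb{E}_\cM(\lambda(s))=0$ on $K$ from the Support Principle, obtain thickness from Proposition~\ref{prop:w_ij_loxodromic}, and conclude with the GNS Vanishing Lemma. Your extra normality check that $w_{i,j}\in N$ is the same observation made at the end of that proposition's proof.
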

\begin{proof}
It follows from Lemma~\ref{lem:support_principle} that
$\mathbb{E}_\cM(\lambda(n)) = 0$ for every loxodromic $n \in N$.
Taking $K$ to be the set of all loxodromic elements of $N$, we have
$\mathbb{E}_\cM(\lambda(s)) = 0$ for every $s \in K$. By
Proposition~\ref{prop:w_ij_loxodromic}, $K$ is $(\cM, N)$-thick. Using Lemma~\ref{lem:GNS_vanishing}, we get that
$\cM = \bC$.
\end{proof}
The $C^*$-algebraic analogue follows by passing to the weak closure,
where the vanishing condition transfers by weak continuity of the canonical trace.
\begin{cor}\label{cor:cstar_trivial_lox_support}
Let $\Gamma$ be a torsion-free acylindrically hyperbolic group with
trivial amenable radical, $N \trianglelefteq \Gamma$ a non-trivial
normal subgroup, and $\cA \leq C_r^*(\Gamma)$ a unital $N$-invariant
$C^*$-subalgebra. If $\tau_0(a\lambda(n)^*) = 0$ for every $a \in \cA$
and every loxodromic $n \in N$, then $\cA = \bC$.
\end{cor}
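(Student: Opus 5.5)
The plan is to pass to the weak operator closure and then apply Corollary~\ref{cor:trivial_lox_support}. Let $\cM := \overline{\cA}^{\,\mathrm{wot}} \subseteq L(\Gamma)$ be the von Neumann algebra generated by $\cA$. It is a von Neumann subalgebra of $L(\Gamma)$ and it is unital, since $1 \in \cA$.

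First, $\cM$ is $N$-invariant. For $n \in N$, the map $\mathrm{Ad}\,\lambda(n)$ is WOT-continuous on $\mathbb{B}(\ell^2\Gamma)$ and maps $\cA$ onto $\cA$. Hence it maps the WOT-closure $\cM$ into itself, and applying the same argument to $n^{-1}$ gives equality.

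Second, the vanishing hypothesis transfers to $\cM$. Fix a loxodromic $n \in N$. The functional $x \mapsto \tau_0(x\lambda(n)^*) = \langle x\delta_e, \delta_n\rangle$ is a vector functional, so it is WOT-continuous. It vanishes on $\cA$ by hypothesis, and therefore it vanishes on $\cM$. In other words, $\operatorname{Supp}(a)$ contains no loxodromic element of $N$ for any $a \in \cM$. Corollary~\ref{cor:trivial_lox_support} now gives $\cM = \bC$.

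Finally, since $\bC 1 \subseteq \cA \subseteq \cM = \bC$, we conclude $\cA = \bC$.

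There is no real obstacle in this argument. The only points needing care are that invariance and the coefficient vanishing both survive WOT-closure, which holds because conjugation by a unitary and vector functionals are both weak-operator continuous.
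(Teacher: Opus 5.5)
Your proposal is correct and follows the paper's own proof. Like the paper, you pass to the weak closure $\cM=\cA''$, transfer the vanishing of Fourier coefficients at loxodromic elements of $N$ via the WOT-continuous vector functional $x\mapsto\langle x\delta_e,\delta_n\rangle$, and then invoke Corollary~\ref{cor:trivial_lox_support}; your explicit check that $N$-invariance survives the weak closure, which the paper simply calls clear, is a small welcome addition.
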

\begin{proof}
Let $\cM := \cA'' \leq L(\Gamma)$ denote the weak operator closure of
$\cA$ inside $L(\Gamma)$. Clearly, $\cM$ is $N$-invariant.
We claim that $\tau_0(x\lambda(n)^*) = 0$ for every $x \in \cM$ and
every loxodromic $n \in N$. Fix a loxodromic $n \in N$ and let
$x \in \cM$. By definition of the weak operator closure, there exists a
net $\{a_\alpha\} \subset \cA$ such that $a_\alpha \to x$ in the weak
operator topology. The functional
$y \mapsto \tau_0(y\lambda(n)^*) = \langle y\delta_e,
\lambda(n)\delta_e\rangle$ is weak-operator continuous. Hence
\[
    \tau_0(x\lambda(n)^*)
    = \lim_\alpha \tau_0(a_\alpha\lambda(n)^*)
    = 0,
\]
Hence, no loxodromic element of $N$ belongs to
$\operatorname{Supp}(x)$ for any $x \in \cM$. Since $\cM$ is an
$N$-invariant von Neumann subalgebra of $L(\Gamma)$,
Corollary~\ref{cor:trivial_lox_support} applies and gives $\cM = \bC$.
Since $\cA \subseteq \cM = \bC$ and $\cA$ is unital, we obtain
$\cA = \bC$.
\end{proof}
\section{Invariant subalgebras: the averaging argument}
\label{sec:cstar}

Throughout this section, let $\Gamma$ be a torsion-free acylindrically hyperbolic group unless otherwise mentioned, and $N \trianglelefteq \Gamma$ a non-trivial non-elementary normal subgroup. Let $\cA \leq C_r^*(\Gamma)$ be a unital $N$-invariant $C^*$-subalgebra, i.e.,
$\lambda(n)\cA\lambda(n)^* = \cA$ for all $n \in N$.

The first step is a selective averaging principle that allows us to project $\cA$ onto
cyclic subgroup algebras while staying inside $\cA$. The key point is that although $\cA$
is only $N$-invariant rather than $\Gamma$-invariant, we can still average by powers of
any primitive element $t$, provided a power of $t$ lands in $N$. This tool has been used before in \cite{amrutam2025invariantC*} and was first introduced in \cite{amrutam2024relative}.
\begin{prop}[Selective averaging]\label{prop:selective_averaging_N}
Let $\cA \leq C_r^*(\Gamma)$ be an $N$-invariant $C^*$-subalgebra. Let $t \in \Gamma$ be
a loxodromic primitive element such that $t^k \in N$ for some $k \geq 1$. Then
$\E_t(\cA) \subseteq \cA$. Similarly, for an $N$-invariant von Neumann algebra $\cM\le L(\Gamma)$, we have $\mathbb{E}_t(\cM)\subset\cM$.
\end{prop}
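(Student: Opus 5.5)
Set $s := t^k \in N$. We average the conjugation action of $\langle s\rangle$ on $\cA$ and show that in the limit only the part of $a$ supported on $\langle t\rangle = C_\Gamma(s)$ survives. Since $\cA$ is $N$-invariant, $\lambda(s)^m a \lambda(s)^{-m} \in \cA$ for all $m$ and all $a \in \cA$. Hence every convex combination
\[
\Phi_M(a) := \frac{1}{M}\sum_{m=1}^{M} \lambda(s^m)\, a\, \lambda(s^{-m})
\]
lies in $\cA$. The goal is to show that $\Phi_M(a) \to \E_t(a)$ in norm for $a \in C_r^*(\Gamma)$, and in $\|\cdot\|_2$ (or weakly) for $a \in L(\Gamma)$. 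Closedness of $\cA$ in norm, and of $\cM$ in the strong topology, then gives the claim.

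First I treat the group-element level. For $\gamma \in \langle t\rangle$ we have $s^m\gamma s^{-m} = \gamma$, so $\Phi_M(\lambda(\gamma)) = \lambda(\gamma)$. For $\gamma \notin \langle t\rangle$, I use that in a torsion-free acylindrically hyperbolic group $C_\Gamma(s) \subseteq E_\Gamma(s) = E_\Gamma(t) = \langle t\rangle$, since $t$ is primitive. Hence $\gamma$ does not commute with any nonzero power of $s$: if it commuted with $s^j$, then $\gamma \in E_\Gamma(s^j) = \langle t\rangle$. So the conjugates $s^m\gamma s^{-m}$ are pairwise distinct, and the unitaries $\lambda(s^m\gamma s^{-m})$ are orthogonal in $\ell^2$. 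For finitely supported $a$ this gives $\|\Phi_M(a - \E_t(a))\|_2 \le \|a\|_2/\sqrt{M} \to 0$. By $\|\cdot\|_2$-density and the contractivity of $\Phi_M$ and $\E_t$ in $\|\cdot\|_2$, we get $\Phi_M(x) \to \E_t(x)$ in $\|\cdot\|_2$ for every $x \in L(\Gamma)$. On bounded sets $\|\cdot\|_2$-convergence implies strong convergence, because the trace is faithful and we work in the standard form. This already settles the von Neumann statement: $\E_t(x)$ is a strong limit of a bounded sequence in $\cM$, so it lies in $\cM$.

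The main obstacle is the $C^*$-statement, where $\ell^2$ convergence is not enough and we need operator-norm convergence. Following the selective averaging technique of \cite{amrutam2024relative,amrutam2025invariantC*}, I would work with finitely supported $a = \sum_{\gamma \in F} c_\gamma \lambda(\gamma)$ and handle the off-$\langle t\rangle$ part. The plan is to choose, instead of plain Cesàro means, the elements
\[
\frac{1}{M}\sum_{m=1}^{M} \lambda(s^{Lm})\,\bigl(a - \E_t(a)\bigr)\,\lambda(s^{-Lm})
\]
with $L$ large. By the north–south dynamics of the loxodromic $s$ on the boundary and the fact that each $\gamma \in F \setminus \langle t\rangle$ moves the fixed pair $\{x_s^\pm\}$, the conjugated supports $s^{Lm}(F\setminus\langle t\rangle)s^{-Lm}$ become free in a ping-pong sense for distinct $m$, say generating a free semigroup. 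Haagerup's inequality, or a Powers-type averaging estimate, then bounds the norm by $C_F/\sqrt{M}$. Rapid decay holds for hyperbolic groups; in general I would instead invoke the Powers averaging property available for acylindrically hyperbolic groups with trivial amenable radical, cf.\ \cite{AD19}, adapted to averaging only along $\langle s\rangle$. This yields norm convergence on a dense subset. Uniform boundedness ($\|\Phi\|\le 1$, $\|\E_t\|\le 1$) extends it to all $a \in C_r^*(\Gamma)$ by an $\varepsilon/3$ argument, giving $\E_t(a) \in \overline{\cA} = \cA$.
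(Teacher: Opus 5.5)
\textbf{Von Neumann half.} Your argument here is correct, and it takes a different, more elementary route than the paper. The paper gets both statements from the operator-norm averaging result \cite[Proposition~3.2]{amrutam2025invariantC*}, applied to the progression $\{t^{kj}\}_{j\ge1}\subseteq N$. You use only plain Ces\`aro means along $\langle s\rangle$ and an $\ell^2$-mixing estimate, which needs nothing beyond $C_\Gamma(s^j)\subseteq\langle t\rangle$ for $j\neq 0$.

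One correction is needed. The bound $\|\Phi_M(a-\E_t(a))\|_2\le\|a\|_2/\sqrt{M}$ is false when two elements of $F$ are conjugate by a power of $s$. For $a=\lambda(\gamma)+\lambda(s\gamma s^{-1})$ one gets $\|\Phi_M(a)\|_2^2=(4M-2)/M^2>2/M$. Each $\delta\in\Gamma$ arises as $s^m\gamma s^{-m}$ for at most $|F|$ pairs $(m,\gamma)$, so Cauchy--Schwarz gives the bound $\sqrt{|F|/M}\,\|a\|_2$ instead. That still suffices.

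\textbf{$C^*$ half: a genuine gap.} The operator-norm estimate is the entire content of the statement, and none of the tools you list delivers it as stated.
\begin{itemize}
\item A free semigroup gives no norm decay by itself. In an amenable group containing a free semigroup, such as $BS(1,2)$, one has $\|\sum_{i=1}^n\lambda(u_i)\|=n$.
\item Haagerup's inequality needs the conjugates to freely generate a free group. This already fails if, e.g., $\gamma,\gamma^2\in F$.
\item Rapid decay alone loses a factor polynomial in the word length of $s^{Lm}\gamma s^{-Lm}$. That length grows with $m$, so rapid decay does not by itself give decay in $M$.
\item The Powers property of \cite{AD19} averages over all of $\Gamma$ towards $\tau_0$, not along $\langle s\rangle$ towards $\E_t$.
\end{itemize}

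What is missing is a relative de la Harpe--Powers lemma. Let $b=a-\E_t(a)$, supported on a finite $F\subseteq\Gamma\setminus\langle t\rangle$.
\begin{itemize}
\item Use that $h\{t^+,t^-\}\cap\{t^+,t^-\}=\emptyset$ for all $h\notin\langle t\rangle$, which is \cite[Lemma~4.1]{amrutam2025invariantC*}. You need disjointness here, not merely that $h$ moves the pair.
\item In $X=S\cup\partial S$, choose neighbourhoods $U_\pm$ of $t^\pm$ such that $Y=U_+\cup U_-$ satisfies $\gamma Y\cap Y=\emptyset$ for all $\gamma\in F$.
\item Fix $x_0\in S$ and set $D=\{g\in\Gamma: gx_0\in Y\}$. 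Then $\gamma D\cap D=\emptyset$ for $\gamma\in F$.
\item By north--south dynamics, choose $L$ with $s^n(X\setminus U_-)\subseteq U_+$ for all $n\ge L$. Then the sets $s^{Lm}(\Gamma\setminus D)$, $1\le m\le M$, are pairwise disjoint.
\end{itemize}
Let $p$ be the projection onto $\ell^2(D)$. Then $pbp=0$, hence $b=(1-p)b+pb(1-p)$. The projections $\lambda(s^{Lm})(1-p)\lambda(s^{Lm})^*$ are pairwise orthogonal, and this yields $\|\frac{1}{M}\sum_{m=1}^M\lambda(s^{Lm})\,b\,\lambda(s^{Lm})^*\|\le 2\|b\|/\sqrt{M}$.

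This is exactly the estimate the paper imports from \cite[Proposition~3.2]{amrutam2025invariantC*}. Once you have it, your $\varepsilon/3$ extension to all of $C_r^*(\Gamma)$ and the norm-closedness of $\cA$ finish the proof.
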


\begin{proof}
Let $a \in \cA$ and $\epsilon > 0$. Since $t^k \in N$ and $N$ is a subgroup, the
subsequence $\{t^{kj} : j \geq 1\} \subseteq N$. By $N$-invariance of $\cA$:
\[
\lambda(t^{kj})\,a\,\lambda(t^{kj})^* \in \cA \qquad \forall\, j \geq 1.
\]
Since $t$ is primitive in $\Gamma$, using~\cite[Lemma~4.1]{amrutam2025invariantC*}, we see that
$\mathrm{Stab}_{\Gamma}(\{t^+, t^-\}) = \langle t \rangle$, so in particular
$h\{t^+, t^-\} \cap \{t^+, t^-\} = \emptyset$ for all $h \notin \langle t \rangle$.
Applying Proposition~3.2 of~\cite{amrutam2025invariantC*} to the element $t$ and the
subsequence $\{kj : j \geq 1\}$, we obtain elements
$t^{kj_1}, \ldots, t^{kj_m} \in \{t^{kj} : j \geq 1\}$ such that
\[
\left\|\frac{1}{m}\sum_{i=1}^{m} \lambda(t^{kj_i})\,a\,\lambda(t^{kj_i})^* - \E_t(a)\right\|
< \epsilon.
\]
Since each $\lambda(t^{kj_i})a\lambda(t^{kj_i})^* \in \cA$ and $\cA$ is norm-closed,
$\E_t(a) \in \cA$.
\end{proof}

With selective averaging at hand, we can now run a ping-pong argument inside a free
subgroup to extract individual group unitaries from $\cA$. The free element required for
the ping-pong is supplied by Lemma~\ref{lem:free_in_normal}, and its normality in $\Gamma$
ensures all conjugations stay within the $N$-invariant subalgebra. 

\begin{remark}
The proof technique of the following theorem is essentially similar to the approach developed by Amrutam and Jiang \cite{amrutam2023invariant}. The major difference in our setting is that the free element required for the ping-pong argument is specifically chosen from the non-trivial normal subgroup $N$, which ensures that all relevant conjugations respect the $N$-invariance of the subalgebra.
\end{remark}

\begin{theorem}\label{thm:N_invariant_position}
Let $\cA \leq C_r^*(\Gamma)$ be an $N$-invariant unital $C^*$-subalgebra. Let
$g \in N$ be a loxodromic element and $a \in \cA$ be such that $\tau_0(a\lambda(g)^*) \neq 0$.
Then $\lambda(g) \in \cA$. A similar conclusion holds for an $N$-invariant von Neumann subalgebra $\cM\le L(\Gamma)$.
\end{theorem}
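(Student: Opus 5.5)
Write $g=t^k$ with $t$ primitive loxodromic and $k\ge 1$ (Remark~\ref{rem:primitive_decomp}). Since $g\in N$, the hypothesis of Proposition~\ref{prop:selective_averaging_N} holds for $t$, so $b:=\E_t(a)\in\cA$. Then $b=\sum_{m\in\bZ} c_{t^m}(a)\lambda(t^m)\in C_r^*(\langle t\rangle)\cong C(\mathbb{T})$ is a nonzero element of $\cA$ with $c_g(b)=c_g(a)\neq 0$. The aim is to isolate the single Fourier mode at $g=t^k$ inside $\cA$, and then show that this mode generates $\lambda(g)$ itself.

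To isolate the mode, invoke Lemma~\ref{lem:free_in_normal} to get $s\in N$ with $\langle t,s\rangle\cong\langle t\rangle\star\langle s\rangle$. Since $s\in N$, every $\lambda(s^j)b\lambda(s^j)^*$ lies in $\cA$. Following Amrutam--Jiang~\cite{amrutam2023invariant}, consider products such as
\[
x_j := b^*\,\lambda(s^j)\,b\,\lambda(s^{j})^*\in\cA .
\]
Because $\langle t\rangle$ and $\langle s\rangle$ are free factors, words of the form $t^{-m}s^{j}t^{m'}s^{-j}$ are reduced and distinct for distinct $(m,m')$ with $m,m'\neq0$. A ping-pong bookkeeping on Fourier supports should therefore let one compute conditional expectations such as $\E_t$ applied to suitable products of conjugates by elements of $N$ of the form $s^j t^{kl} s^{-j}$. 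The intended output is an element of $\cA$ supported on $\langle t\rangle$ whose spectral behaviour singles out frequency $k$, for instance via iterated selective averaging combined with multiplication. A cleaner route would be this: the $C^*$-algebra $\cA\cap C_r^*(\langle t\rangle)$ is a unital $C^*$-subalgebra of $C(\mathbb{T})$ containing $b$. It is invariant under conjugation by $s$ only in a twisted sense, so freeness is used to show that if $\cA\cap C^*_r(\langle t\rangle)$ contains a nonconstant function with a nonzero $k$-th coefficient, then $\cA$ contains $\lambda(t^{k'})$ for some $k'$ dividing $k$. The $k$-th Fourier mode would first be extracted using the unitaries $\lambda(s^jt^{kl}s^{-j})$ whose products with $b$ are orthogonal for distinct $j$. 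That orthogonality gives the averaging estimate, in the spirit of the Bessel argument of Lemma~\ref{lem:GNS_vanishing}, which shows that the norm-limit of the averages lands in $\bC\lambda(g)$.

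The main obstacle I expect is exactly this extraction step: converting "nonzero coefficient at $g$" into "$\lambda(g)\in\cA$" inside the abelian algebra $C(\mathbb{T})$. Here one must exploit that $\cA$ is not just any subalgebra, using conjugation by the free element $s$ to transport $b$ off the axis of $t$ and then multiply back. Norm convergence in the $C^*$ case requires the quantitative ping-pong estimates (as in \cite[Proposition~3.2]{amrutam2025invariantC*}) rather than mere $\|\cdot\|_2$-estimates. For the von Neumann version the same scheme works with $\mathbb{E}_t(\cM)\subseteq\cM$ from Proposition~\ref{prop:selective_averaging_N}, where $\|\cdot\|_2$-averaging and weak closure make the final step easier. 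Once a nonzero multiple of $\lambda(g)$ is in $\cA$, scaling gives $\lambda(g)\in\cA$.
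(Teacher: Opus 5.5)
Your opening moves agree with the paper: $b=\E_t(a)\in\cA$ by Proposition~\ref{prop:selective_averaging_N}, a free partner $s\in N$ from Lemma~\ref{lem:free_in_normal}, and the transported copies $\lambda(s^j)b\lambda(s^j)^*\in\cA$. But the step you flag as the main obstacle is the whole content of the theorem, and the mechanisms you sketch for it do not work.

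Isolating the $k$-th mode back on the axis of $t$ is the wrong target. For $j\neq 0$, $\E_t\bigl(b^*\lambda(s^j)b\lambda(s^j)^*\bigr)$ keeps only the words $t^{-m}s^jt^{m'}s^{-j}$ with $m'=0$. So it equals $\tau_0(a)\,b^*$ and yields nothing new. Nor can any argument confined to $C_r^*(\langle t\rangle)\cong C(\mathbb{T})$ succeed: $C^*(1,\lambda(t)+\lambda(t)^*)$ contains $\lambda(t^k)+\lambda(t^{-k})$ for every $k$, but no $\lambda(t^{k'})$ with $k'\neq 0$.

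The Bessel-type averaging fails for a more basic reason. Take $h=s^jt^{kl}s^{-j}$ with $j,l\neq 0$. Then $h^{-1}gh\notin\langle t\rangle$ in $\langle t,s\rangle\cong F_2$. Hence $\lambda(h)b\lambda(h)^*$ has zero Fourier coefficient at $g$, and so does every average of such conjugates and every $\|\cdot\|_2$- or norm-limit of those averages. None of them can be a nonzero multiple of $\lambda(g)$.

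The missing idea is to isolate a single mode on a \emph{new} axis built from $g$ and $s$, and then transport it back. The paper multiplies $b$ by its transport:
$y=b\cdot\lambda(s)b\lambda(s)^*=\sum_{k_1,k_2}c_{t^{k_1}}c_{t^{k_2}}\lambda(t^{k_1}st^{k_2}s^{-1})\in\cA$.
Since $gsgs^{-1}=t^kst^ks^{-1}\in N$, its primitive root $u$ has a power in $N$, so Proposition~\ref{prop:selective_averaging_N} gives $\E_u(y)\in\cA$. A commutation argument in $F_2$ shows that $t^kst^ks^{-1}$ is the only non-trivial support word lying in $\langle u\rangle$. Thus, after normalizing $\tau_0(a)=0$, $\E_u(y)=c_g^2\lambda(gsgs^{-1})$ and $\lambda(gsgs^{-1})\in\cA$.

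Your proposal also lacks any step going from such a unitary back to $\lambda(g)$. The paper does this in three moves:
\begin{enumerate}
\item It repeats the construction with $s^2$ and combines the result with $\lambda(gsgs^{-1})$ and conjugation by $s$ to obtain $\lambda(g^2)$.
\item It averages $b\,\lambda(sg^2s^{-1})$ along the root of $gsg^2s^{-1}\in N$ to get $\lambda(gsg^2s^{-1})$.
\item It conjugates $\lambda(gsg^2s^{-1})\lambda(gsgs^{-1})^*=\lambda\bigl((gs)g(gs)^{-1}\bigr)$ by $(gs)^{-1}\in N$.
\end{enumerate}
One further selective averaging would also suffice. Since $(sts^{-1})^k\in N$ and $t^{k-m}st^ks^{-1}\in\langle sts^{-1}\rangle$ only for $m=k$, one gets $\E_{sts^{-1}}\bigl(b^*\lambda(gsgs^{-1})\bigr)=\overline{c_g}\,\lambda(sgs^{-1})$.

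Finally, all norm-convergence issues are already absorbed into Proposition~\ref{prop:selective_averaging_N}. No separate quantitative ping-pong or Bessel estimate is needed.
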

\begin{proof}
We provide the proof for $\mathcal{A}$, and the proof for $\mathcal{M}$ is analogous. 
We may assume without loss of generality that $\tau_0(a) = 0$. Write
$c_\gamma := \tau_0(a\lambda(\gamma)^*)$ for $\gamma \in \Gamma$, so $c_g \neq 0$ by
assumption. We often write $g$ instead of $\lambda(g)$ for ease of notation. Write
$g = t^k$ for some primitive element $t \in \Gamma$ and integer $k \geq 1$ (this is
possible by Remark~4.2 of~\cite{amrutam2025invariantC*} applied in the torsion-free
setting). Since $t^k = g \in N$, Proposition~\ref{prop:selective_averaging_N} gives
\begin{equation}\label{eq:Et_in_A}
\E_t(a) = \sum_{m \in \mathbb{Z}} c_{t^m}\lambda(t^m) \in \cA.
\end{equation}
Note that $c_{t^k} = c_g \neq 0$. Using Lemma~\ref{lem:free_in_normal}, there exists an element $s \in N$ such that
\[
\langle t, s \rangle \cong \langle t \rangle \star \langle s \rangle \cong F_2.
\]
Since $s \in N$, $N$-invariance gives $\lambda(s)a\lambda(s)^* \in \cA$. The element
$sts^{-1} \in \Gamma$ is primitive (conjugation preserves primitivity), and
\[
(sts^{-1})^k = st^ks^{-1} = sgs^{-1} \in N,
\]
since $g \in N$, $s \in N$, and $N \trianglelefteq \Gamma$. Using
Proposition~\ref{prop:selective_averaging_N} applied to $sts^{-1}$ and
$\lambda(s)a\lambda(s)^*$, we get that
\begin{equation}\label{eq:Ests_in_A}
\E_{sts^{-1}}\left(\lambda(s)a\lambda(s)^*\right)
= \sum_{m \in \mathbb{Z}} c_{t^m}\lambda(st^ms^{-1}) \in \cA.
\end{equation}
Indeed, $\E_{sts^{-1}}(\lambda(s)\lambda(t^m)\lambda(s)^*) = \lambda(st^ms^{-1})$, and
since $E_{sts^{-1}}(\lambda(s)\lambda(\gamma)\lambda(s)^*) = 0$ for
$\gamma \notin \langle t \rangle$ (as $s\gamma s^{-1} \notin \langle sts^{-1} \rangle$
for $\gamma \notin \langle t \rangle$ in $F_2$), the formula follows. Multiplying
\eqref{eq:Et_in_A} and \eqref{eq:Ests_in_A} inside $\cA$, we get that
\begin{equation}\label{eq:product_y}
y := \E_t(a)\cdot \E_{sts^{-1}}\left(\lambda(s)a\lambda(s)^*\right)
= \sum_{k_1, k_2 \in \mathbb{Z}} c_{t^{k_1}} c_{t^{k_2}}\,\lambda(t^{k_1}st^{k_2}s^{-1})
\in \cA.
\end{equation}
Write $t^kst^ks^{-1} = gsgs^{-1} = u^\ell$ for some primitive element $u \in \Gamma$ and
non-zero integer $\ell$. Since $g, s \in N$ and $N \le \Gamma$, we see that
\[
(gsgs^{-1})^j = u^{\ell j} \in N \qquad \forall\, j \geq 1.
\]
Applying Proposition~\ref{prop:selective_averaging_N} to $u$ and $y$, we get $E_u(y) \in \cA$. We claim that
$\E_u(y) = c_g^2\,\lambda(gsgs^{-1})$. It suffices to show that
\[
\langle u \rangle \cap \{t^{k_1}st^{k_2}s^{-1} : k_1, k_2 \in \mathbb{Z}\}
= \{e,\, gsgs^{-1}\}.
\]
If $u^i = t^{k_1}st^{k_2}s^{-1}$ for some $i \neq 0$, then since
$u^\ell = gsgs^{-1} = t^kst^ks^{-1}$ commutes with $u^i = t^{k_1}st^{k_2}s^{-1}$,
\cite[Lemma~2.3]{amrutam2025invariantC*} applied in $\langle t, s \rangle \cong F_2$
gives $k_1 = k_2 = k$. Hence $u^i = u^\ell$, and primitivity of $u$ forces $i = \ell$.
The claim follows. Therefore $\E_u(y) = c_g^2\,\lambda(gsgs^{-1}) \in \cA$, and since
$c_g \neq 0$, we get that
\begin{equation}\label{eq:gsgs}
\lambda(gsgs^{-1}) \in \cA.
\end{equation}
Replace $s$ by $s^2 \in N$ (noting that
$\langle t, s^2 \rangle \cong \langle t \rangle \star \langle s^2 \rangle \cong F_2$
since $\langle t, s \rangle \cong F_2$). The same argument gives
$\lambda(gs^2gs^{-2}) \in \cA$. Now a computation shows that
\begin{align*}
\lambda(s)\lambda(sg^{-1}s^{-1}g)\lambda(s)^{-1}
&= \lambda(s^2g^{-1}s^{-1}gs^{-1})\\
&= \lambda(s^2g^{-1}s^{-2})\cdot\lambda(sgs^{-1})\\
&= \lambda(gs^2gs^{-2})^{-1}\cdot\lambda(gsgs^{-1}) \in \cA,
\end{align*}
where we have used that $s^2g^{-1}s^{-2}g^{-1} = (gs^2gs^{-2})^{-1}$. Since $s \in N$,
$N$-invariance applied to the element $\lambda(s^2g^{-1}s^{-1}gs^{-1}) \in \cA$ gives
$\lambda(sg^{-1}s^{-1}g) \in \cA$. Hence,
\begin{equation}\label{eq:g2}
\lambda(g)^2 = \lambda(gsgs^{-1})\cdot\lambda(sg^{-1}s^{-1}g) \in \cA.
\end{equation}
Since $\lambda(g^2) \in \cA$ and $s \in N$, $N$-invariance gives
$\lambda(sg^2s^{-1}) \in \cA$. Since $\E_t(a) \in \cA$, we get that
\begin{equation}\label{eq:product2}
\E_t(a)\cdot\lambda(sg^2s^{-1})
= \sum_{m \in \mathbb{Z}} c_{t^m}\lambda(t^mst^{2k}s^{-1}) \in \cA.
\end{equation}
Write $t^kst^{2k}s^{-1} = gsg^2s^{-1} = w^{\ell'}$ for some primitive element
$w \in \Gamma$ and non-zero integer $\ell'$. Since $g, s \in N$ and
$N \trianglelefteq \Gamma$, we have $gsg^2s^{-1} \in N$, so $w^{\ell' j} \in N$ for all
$j \geq 1$. Applying Proposition~\ref{prop:selective_averaging_N} to $w$ and
$\E_t(a)\cdot\lambda(sg^2s^{-1})$, we see that
\[
\E_w\!\left(\E_t(a)\cdot\lambda(sg^2s^{-1})\right) \in \cA.
\]
Using~\cite[Lemma~2.3]{amrutam2025invariantC*} in $\langle t, s \rangle \cong F_2$, the
only element of $\langle w \rangle \cap \{t^m s t^{2k} s^{-1} : m \in \mathbb{Z}\}$ with
non-zero coefficient is $t^k st^{2k}s^{-1}$ (corresponding to $m = k$). Therefore,
\[
E_w\!\left(E_t(a)\cdot\lambda(sg^2s^{-1})\right) = c_g\,\lambda(gsg^2s^{-1}) \in \cA,
\]
and since $c_g \neq 0$:
\begin{equation}\label{eq:gsg2s}
\lambda(gsg^2s^{-1}) \in \cA.
\end{equation}
Now,
\begin{align*}
\lambda(gsg^2s^{-1})\cdot\lambda(gsgs^{-1})^{-1}
&= \lambda(t^kst^{2k}s^{-1})\cdot\lambda(t^kst^ks^{-1})^{-1}\\
&= \lambda(t^kst^{2k}s^{-1}\cdot st^{-k}s^{-1}t^{-k})\\
&= \lambda(t^k\cdot st^{2k}t^{-k}s^{-1}\cdot t^{-k})\\
&= \lambda(t^k\cdot st^ks^{-1}\cdot t^{-k})\\
&= \lambda((t^ks)\,t^k\,(t^ks)^{-1})\\
&= \lambda((gs)\,g\,(gs)^{-1}) \in \cA,
\end{align*}
where we used \eqref{eq:gsg2s} and \eqref{eq:gsgs}. Since $g \in N$ and $s \in N$, we
have $gs \in N$. Applying $N$-invariance with $(gs)^{-1} \in N$:
\[
\lambda((gs)^{-1})\cdot\lambda((gs)g(gs)^{-1})\cdot\lambda(gs) = \lambda(g) \in \cA.
\]
This completes the proof.
\end{proof}
\section{The relative ISR property for acylindrically hyperbolic
groups}
\label{sec:ISR}

We now have all the ingredients to prove the main theorems. The
argument is unified as follows. If $\cM = \bC$, there is nothing to
prove. If $\cM \neq \bC$, using
Corollary~\ref{cor:trivial_lox_support}, there exists an element
$a \in \cM$ whose Fourier support contains a loxodromic element
$n \in N$. Theorem~\ref{thm:N_invariant_position} then forces
$\lambda(n) \in \cM$, so $\cM \cap L(N) \neq \bC$. From this
foothold, the commutant rigidity of Lemma~\ref{lem:plumpness}
determines $\mathbb{E}_\cM(\lambda(g))$ for every $g \in \Gamma$ and
the conclusion $\cM = L(K)$ follows.
\begin{proof}[Proof of Theorem~\ref{thm:relativeISR}]
If $\cM = \bC$ the conclusion holds trivially. Assume therefore that
$\cM \neq \bC$. Using
Corollary~\ref{cor:trivial_lox_support}, there exists $a \in \cM$ and
a loxodromic element $n_0 \in N$ such that $\tau_0(a\lambda(n_0)^*)
\neq 0$. By Theorem~\ref{thm:N_invariant_position}, $\lambda(n_0) \in
\cM$, so $\cM \cap L(N) \neq \bC$.

Define
\[
  K_0 = \{s \in N : \lambda(s) \in \cM\}.
\]
Since $\cM$ is an $N$-invariant von Neumann subalgebra, $K_0$ is a normal subgroup of $N$. Since $\lambda(n_0) \in \cM$ and $n_0 \in N$,
$K_0$ is non-trivial. Moreover, $L(K_0) \subseteq \cM \cap L(N)$.

Let $g \in \Gamma \setminus \{e\}$ be arbitrary and set
\[
  H = K_0 \cap g^{-1}K_0 g.
\]
Since $N \trianglelefteq \Gamma$ and $K_0 \trianglelefteq N$, the
subgroup $g^{-1}K_0 g$ is also normal in $N$, so $H \trianglelefteq
N$. Since $K_0 \trianglelefteq N \trianglelefteq \Gamma$ and $\Gamma$
has trivial amenable radical, the same argument as in
Lemma~\ref{lem:plumpness} shows that $K_0$ is $s$-normal in $\Gamma$,
giving $|K_0 \cap g^{-1}K_0 g| = \infty$. In particular, $H$ is
infinite, hence non-trivial. By Lemma~\ref{lem:plumpness},
\[
  L(H)' \cap L(\Gamma) = \bC.
\]
Let $y = \lambda(g)^* \E_\cM(\lambda(g))$. We claim $y$ commutes with
every $t \in L(H)$. Since $H \subseteq K_0$ and $L(K_0) \subseteq
\cM$, we have $t \in \cM$. By the right $\cM$-module property of
$\E_\cM$,
\[
  yt = \lambda(g)^* \E_\cM(\lambda(g)t) = \lambda(g)^*\E_\cM(\lambda(g)t).
\]
Since $t \in L(H) \subseteq L(g^{-1}K_0 g)$, the element
$\lambda(g)t\lambda(g)^* \in L(K_0) \subseteq \cM$. Writing
$\lambda(g)t = (\lambda(g)t\lambda(g)^*)\lambda(g)$ and using the left
$\cM$-module property gives $yt = ty$. Because $L(H)' \cap L(\Gamma)
= \bC$, there exists $c_g \in \bC$ with
\[
  \lambda(g)^* \E_\cM(\lambda(g)) = c_g \cdot 1
  \implies \E_\cM(\lambda(g)) = c_g \lambda(g).
\]
Applying $\E_\cM$ to both sides yields $c_g = c_g^2$, so $c_g \in
\{0,1\}$. Setting $K = \{g \in \Gamma \mid \lambda(g) \in \cM\}$, it
is clear that $\cM = L(K)$.
\end{proof}

With the relative ISR-property established for von Neumann algebras, we now turn to the $C^*$-algebraic setting. To prove the analogous rigidity for reduced group $C^*$-algebras, our strategy is to pass to the weak closure and invoke Theorem~\ref{thm:relativeISR}. From there, the Fourier coefficient machinery developed in Section~\ref{sec:cstar} allows us to cleanly pull the subgroup structure back down to the $C^*$-level using $C^*$-irreducibility.

While the von Neumann algebraic
result, Theorem~\ref{thm:relativeISR}, holds in the full generality of
torsion-free acylindrically hyperbolic groups, the $C^*$-algebraic
analogue requires the more restrictive hypothesis that $\Gamma$ is a
torsion-free hyperbolic group. The reason for this restriction is that for
general torsion-free acylindrically hyperbolic group, elements of $H
\cap N$ may be elliptic. In the torsion-free hyperbolic setting,
however, every non-trivial element is loxodromic, so this difficulty
does not arise.
\begin{proof}[Proof of Theorem~\ref{thm:CstarISR}]
Assume that $\mathcal{A}\cap C_r^*(N)=\mathbb{C}$.
Let $a \in \cA$ with $\tau_0(a) = 0$. Let $\gamma \in \operatorname{Supp}(a)$, i.e.,
$c_\gamma := \tau_0(a\lambda(\gamma)^*) \neq 0$. Note that it follows from
Theorem~\ref{thm:N_invariant_position} that $\operatorname{Supp}(a) \subset \Gamma \setminus N$.
Given $\epsilon > 0$ (to be chosen later), we can choose a finite subset
$F \subset \Gamma \setminus N$ containing $\gamma$ such that the truncation
$a_0 = \sum_{s \in F} c_s \lambda(s)$ satisfies $\|a - a_0\|_2 < \epsilon$. Let
$r = a - a_0$. Let $Z = \{s\gamma^{-1} \mid s \in F \setminus \{\gamma\}\}$. Since $N$
is relatively i.c.c.\ in $\Gamma$, we can choose $n_0 \in N \setminus C_N(\gamma)$ such
that $n_0 Z n_0^{-1} \cap Z = \emptyset$. Let $b = \lambda(n_0) a \lambda(n_0)^* \in \cA$.
Moreover, with $b_0 = \lambda(n_0) a_0 \lambda(n_0)^*$, we see that $\|b\|_2 = \|a\|_2$
and $\|b - b_0\|_2 = \|r\|_2 < \epsilon$. Observe that
$g_0: = \gamma^{-1} n_0 \gamma n_0^{-1} \in N \setminus \{e\}$. We evaluate the Fourier
coefficient of $a^* b \in \cA$ at $g_0$, i.e.,
\[
|\tau_0(a^* b \lambda(g_0)^*)| = |\langle b \lambda(g_0)^*,a \rangle|
\leq |\langle b \lambda(g_0)^*,a \rangle - \langle b_0 \lambda(g_0)^*, a_0 \rangle|
+ |\langle b_0 \lambda(g_0)^*, a_0 \rangle|.
\]
Expanding the inner product and applying the Cauchy--Schwarz and triangle inequalities
(noting $\|a_0\|_2 = \|a - r\|_2 \leq \|a\|_2 + \|r\|_2 < \|a\|_2 + \epsilon$), we get
that
\begin{align*}
|\langle b \lambda(g_0)^*, a\rangle - \langle b_0 \lambda(g_0)^*,a_0 \rangle|
&\leq |\langle b \lambda(g_0)^*,r \rangle| + |\langle (b - b_0) \lambda(g_0)^*,a_0 \rangle| \\
&\leq \|r\|_2 \|b\|_2 + \|b - b_0\|_2 \|a_0\|_2 \\
&< \epsilon \|a\|_2 + \epsilon (\|a\|_2 + \epsilon) = 2\epsilon\|a\|_2 + \epsilon^2.
\end{align*}
To compute the principal term $\langle b_0 \lambda(g_0)^*, a_0 \rangle$, we expand the
finite sums to get
\[
\langle b_0 \lambda(g_0)^*, a_0 \rangle = \tau_0(a_0^* b_0 \lambda(g_0)^*)
= \sum_{s, t \in F} \overline{c_s}\, c_t\,
\tau_0(\lambda(s^{-1} n_0 t n_0^{-1} g_0^{-1})).
\]
The trace is non-zero only if $s^{-1} n_0 t n_0^{-1} = g_0 = \gamma^{-1} n_0 \gamma n_0^{-1}$.
Rearranging gives $n_0(t\gamma^{-1})n_0^{-1} = s\gamma^{-1}$. By our choice of $n_0$ and
the disjointness $n_0 Z n_0^{-1} \cap Z = \emptyset$ (noting $e \notin Z$), this occurs
if and only if $s = t = \gamma$. Thus, the sum collapses to
$\langle b_0 \lambda(g_0)^*, a_0 \rangle = |c_\gamma|^2$. We now apply the reverse
triangle inequality to get
\[
|\langle b \lambda(g_0)^*,a \rangle| \geq |\langle b_0 \lambda(g_0)^*, a_0 \rangle|
- |\langle b \lambda(g_0)^*, a\rangle - \langle b_0 \lambda(g_0)^*,a_0 \rangle|.
\]
Substituting our computed principal term and the upper bound for the error term, we obtain
\[
|\tau_0(a^* b \lambda(g_0)^*)| \geq |c_\gamma|^2 - (2\epsilon\|a\|_2 + \epsilon^2).
\]
Choosing $\epsilon > 0$ small enough such that $2\epsilon\|a\|_2 + \epsilon^2 < |c_\gamma|^2$
ensures the Fourier coefficient is strictly positive. By
Theorem~\ref{thm:N_invariant_position}, $\lambda(g_0) \in \cA$, which contradicts
$\cA \cap C_r^*(N) = \mathbb{C}$. Therefore, $\mathcal{A}=\mathbb{C}$. 

Suppose now that $\cA \cap C_r^*(N) \neq \mathbb{C}$. Then, $\mathcal{A}_N:=\mathcal{A}\cap C_r^*(N)$
is a $N$-invariant $C^*$-subalgebra of $C_r^*(N)$. It then follows from Theorem~\ref{thm:N_invariant_position} that if there exists $a\in\mathcal{A}_N$ with $\tau_0(a\lambda(g)^*)\ne 0$ for some $g\in N$, then $\lambda(g)\in\mathcal{A}_N$. Letting $K=\{g\in N: \lambda(g)\in \mathcal{A}_N\}$, it follows from \cite[Proposition~2.1]{amrutam2025invariantC*} that $\mathcal{A}_N=C_r^*(K)$ for some normal subgroup $K_0\trianglelefteq N$. 
Since $\cA \subseteq C_r^*(\Gamma)$ and $\cA'' = L(H)$ (by Theorem~\ref{thm:relativeISR}), we have that $$\cA \subseteq \cA'' \cap C_r^*(\Gamma) = L(H) \cap C_r^*(\Gamma).$$It is a standard fact that $L(H) \cap C_r^*(\Gamma) = C_r^*(H)$, which implies that $\cA \subseteq C_r^*(H)$. Next, we claim that $K = H \cap N$. Because $\cA \subseteq C_r^*(H)$, we have that$$C_r^*(K) = \cA \cap C_r^*(N) \subseteq C_r^*(H) \cap C_r^*(N) = C_r^*(H \cap N).$$ This immediately implies the subgroup inclusion $K \leq H \cap N$. To prove the reverse inclusion, let $h \in H \cap N$. Because $\cA$ is weakly dense in $L(H)$ and $\lambda(h) \in L(H)$, there must exist an element $a \in \cA$ such that its Fourier coefficient at $h$ is non-zero. Indeed, if $\tau_0(a\lambda(h)^*) = 0$ for all $a \in \cA$, weak continuity of the inner product would imply $\tau_0(x\lambda(h)^*) = 0$ for all $x \in L(H)$, which is absurd for $x = \lambda(h)$. Since $h \in N$ and we have found $a \in \cA$ with $c_h(a) \neq 0$, Theorem~\ref{thm:N_invariant_position} guarantees that $\lambda(h) \in \cA$. Consequently, $\lambda(h) \in \cA \cap C_r^*(N) = C_r^*(K)$, which forces $h \in K$. Therefore, $K = H \cap N$. Finally, we show that $H$ normalizes $K$ (i.e., $H \subseteq N_\Gamma(K)$). Let $h \in H$ and $k \in K$. We consider the conjugate $h k h^{-1}$. Since $k \in K \subseteq H$ and $H$ is a subgroup, closure under conjugation gives $h k h^{-1} \in H$. Moreover, since $k \in K \subseteq N$ and $N \trianglelefteq \Gamma$, it follows that $h k h^{-1} \in N$. Combining these facts, $h k h^{-1} \in H \cap N = K$. Since this holds for all $h \in H$, we have $h K h^{-1} = K$, which proves that $H$ normalizes $K$. Hence, $H \subseteq N_\Gamma(K)$. Since we established that $\cA \subseteq C_r^*(H)$, we conclude that $$C_r^*(K)\subset\cA \subseteq C_r^*(H) \subseteq C_r^*(N_\Gamma(K)).$$
Since $K \trianglelefteq N \trianglelefteq \Gamma$ and $\Gamma$ has trivial amenable
radical, hence $C^*$-simple (see~\cite{AD19}), and $C^*$-simplicity passes to normal subgroups (see~\cite[Theorem~1.4]{breuillard2017c}), it follows that $C_r^*(K)$ is simple. Moreover, note that $C_r^*(K)'\cap C_r^*(N_{\Gamma}(K))=\mathbb{C}$ (using Lemma~\ref{lem:plumpness}). In particular, using~\cite[Theorem~1.3]{ursu2022relative}, we see that $K$ is plump in $N_{\Gamma}(K)$ (in the sense of \cite{amrutam2021intermediate}). Therefore, every intermediate $C^*$-algebra $\mathcal{B}$ with $C_r^*(K)\subseteq\mathcal{B}\subseteq C_r^*(N_{\Gamma}(K))$ is simple, in particular, the inclusion $C_r^*(K)\subset C_r^*(N_{\Gamma}(K))$ is $C^*$-irreducible in the sense of \cite{rordam2023irreducible}. We can now appeal to \cite[Theorem~5.3]{bedos2023c} to conclude that $\mathcal{A}=C_r^*(K_0)$ for some subgroup $K_0\le\Gamma$.  
\end{proof}
\begin{remark}
In \cite[Subsection~4.3]{amrutam2025invariantC*}, it was remarked that following the arguments made for the torsion-free hyperbolic group, all torsion-free acylindrically hyperbolic groups with trivial amenable radical have $C^*$-ISR-property. While the conclusion is correct, it does not immediately follow from the torsion-free hyperbolic group case, owing to the existence of elliptic elements in the case of torsion-free acylindrically hyperbolic groups. The correct argument is as follows. Let $\mathcal{A}$ be a $\Gamma$-invariant unital $C^*$-subalgebra of $C_r^*(\Gamma)$, where $\Gamma$ is a torsion-free acylindrically hyperbolic group with trivial amenable radical. If $\mathcal{A}\ne \mathbb{C}$, it follow from Corollary~\ref{cor:cstar_trivial_lox_support} that there exists a loxodromic $g\in \Gamma$ and $a\in\mathcal{A}$ such that $\tau_0(a\lambda(g)^*)\ne 0$. Now, arguing similarly as in Theorem~\ref{thm:N_invariant_position}, we will obtain that $\lambda(g)\in\mathcal{A}$. Now, we can argue as in the last part of Theorem~\ref{thm:CstarISR} to get that $C_r^*(N)\subseteq\mathcal{A}\subseteq C_r^*(\Gamma)$, where $N=\{g\in\Gamma: \lambda(g)\in\cA\}$ with the inclusion $C_r^*(N)\subset C_r^*(\Gamma)$ irreducible. It now follows from \cite[Theorem~5.3]{bedos2023c} to get that $\mathcal{A}$ must come from a normal subgroup.  
\end{remark}
\section{Relative ISR for Higher-Rank Lattices}\label{sec:lattice}

In this section, we establish Theorem~\ref{thm:latticeISR}, proving the relative ISR-property for irreducible lattices in higher-rank semisimple Lie groups. The setting completely distances itself from acylindrical hyperbolicity and instead leverages unitary factorization along with the Character Decomposition Property. 

Our strategy is
heavily influenced by the approach developed by Dudko and Jiang~\cite{dudko2024character},
where the Character Decomposition Property (CDP) -- introduced and systematically studied
for the first time -- plays a central role. The argument proceeds in
three steps. 

First, a structural unitary factorization lemma (Lemma~\ref{lem:factorisation})
decomposes unitaries $\lambda(g)$ with non-zero conditional expectation into $\cM$ as a
product of a unitary in $\cM$ and a unitary in its relative commutant
$\cM' \cap L(\Gamma)$. Second, a direct computation shows that the positive-definite
functions $\phi_\cM$ and $\phi_{\cM'}$ associated to $\cM$ and its relative commutant
satisfy $\phi_\cM(g)\phi_{\cM'}(g) = 0$ for all $g \in N \setminus \{e\}$. Third, the
CDP inherited by $N$ from $\Gamma$ -- via the Margulis Normal Subgroup Theorem and the
inheritance result of~\cite[Proposition~3.2(1)]{dudko2024character} -- forces one of
these functions to be identically $\delta_e$, from which the conclusion $\cM = L(K)$ is
extracted. 
\subsection{Structural Lemmas on Unitary Factorization}
    We begin by recording the following algebraic lemma for the sake of completion. This has already been established in \cite{chifan2020rigidity} and \cite{chifan2023invariant}.
\begin{lemma}[Unitary factorisation]\label{lem:factorisation}
Let $\mathcal{P} \subseteq \mathcal{Q}$ be an inclusion of von Neumann algebras with $(\mathcal{Q}, \tau)$ a tracial von Neumann factor. Suppose $u \in \mathcal{Q}$ is a unitary such that $u\mathcal{P} u^* \subseteq \mathcal{P}$. Assume further that the center of the subalgebra is trivial, $\mathcal{Z}(\mathcal{P}) = \bC$. If $\E_{\mathcal{P}}(u) \neq 0$, then $u$ factors as
\[
u = a(u)b(u)
\]
where $a(u) \in \mathcal{U}(\mathcal{P})$ and $b(u) \in \mathcal{U}(\mathcal{P}' \cap \mathcal{Q})$. Explicitly, $a(u) = c_u^{-1/2}\E_{\mathcal{P}}(u)$ for some $c_u \in \bR_{>0}$.
\end{lemma}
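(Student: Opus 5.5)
Set $x := \E_{\mathcal{P}}(u) \in \mathcal{P}$, which is non-zero by hypothesis. The plan is to show that $x$ is a scalar multiple of a unitary in $\mathcal{P}$ by checking that $x^*x$ and $xx^*$ are central in $\mathcal{P}$, and then to verify that $b(u) := a(u)^* u$ commutes with $\mathcal{P}$.

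\emph{Step 1: a twisted commutation relation for $x$.} Since $u\mathcal{P}u^* \subseteq \mathcal{P}$, the map $\theta(p) := upu^*$ is an injective trace-preserving $*$-endomorphism of $\mathcal{P}$. For $p \in \mathcal{P}$ we have $\theta(p) u = up$. Applying $\E_{\mathcal{P}}$ and using its $\mathcal{P}$-bimodule property gives
\[
\theta(p)\,x = x\,p \qquad \text{for all } p \in \mathcal{P}.
\]
From this, $x^*x\,p = x^*\theta(p)x$. Taking adjoints in the relation yields $p\,x^* = x^*\theta(p)$, so $x^*\theta(p)x = p\,x^*x$. Hence $x^*x$ commutes with every $p \in \mathcal{P}$, i.e.\ $x^*x \in \mathcal{Z}(\mathcal{P}) = \bC$. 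Since $x \neq 0$, we get $x^*x = c_u$ for some $c_u > 0$.

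\emph{Step 2: $x$ is a scalar multiple of a unitary.} By Step 1, $a(u) := c_u^{-1/2}x$ is an isometry in $\mathcal{P}$. Because $\mathcal{P}$ is a finite von Neumann algebra (it carries the faithful trace $\tau$ restricted from $\mathcal{Q}$), every isometry in $\mathcal{P}$ is a unitary. So $a(u) \in \mathcal{U}(\mathcal{P})$. As a consistency check, $xx^* = c_u$ as well, which also follows from $\tau(1 - c_u^{-1}xx^*) = 0$ together with positivity.

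\emph{Step 3: the complementary factor.} Put $b(u) := a(u)^*u$. This is a unitary in $\mathcal{Q}$, and by construction $u = a(u)b(u)$. It remains to show $b(u) \in \mathcal{P}'$. For $p \in \mathcal{P}$,
\[
b(u)\,p = a(u)^*\,u\,p = a(u)^*\,\theta(p)\,u .
\]
The relation from Step 1, divided by $c_u^{1/2}$, reads $\theta(p)\,a(u) = a(u)\,p$. Since $a(u)$ is unitary, this gives $a(u)^*\theta(p) = p\,a(u)^*$. Substituting, $b(u)\,p = p\,a(u)^*u = p\,b(u)$. Thus $b(u) \in \mathcal{P}' \cap \mathcal{Q}$.

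The only genuinely delicate step is Step 2: the isometry-to-unitary upgrade. It relies on the finiteness of $\mathcal{P}$ coming from the faithful trace. Note that $\theta$ need not be surjective a priori, which is why I would prove centrality of $x^*x$ directly rather than appeal to $\theta$ being an automorphism. Surjectivity of $\theta$ follows afterwards from $\theta = \mathrm{Ad}(a(u))$ on $\mathcal{P}$. The factoriality of $\mathcal{Q}$ is not needed for this argument.
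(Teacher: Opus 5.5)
Your proof is correct and follows essentially the same route as the paper: your relation $\theta(p)\,\E_{\mathcal P}(u)=\E_{\mathcal P}(u)\,p$ is exactly what underlies the paper's claim, cited from Chifan--Das, that $u^*\E_{\mathcal P}(u)\in\mathcal P'\cap\mathcal Q$, and your $b(u)=a(u)^*u$ coincides with the paper's $c_u^{-1/2}\bigl(u^*\E_{\mathcal P}(u)\bigr)^*$. Your Step~2 is the cleaner justification of $\E_{\mathcal P}(u)\E_{\mathcal P}(u)^*=c_u$: the paper's trace computation tacitly treats $\E_{\mathcal P}(u)\E_{\mathcal P}(u)^*$ as a scalar, which would also require $u^*\mathcal P u\subseteq\mathcal P$, whereas your use of faithfulness of $\tau$ on the projection $1-a(u)a(u)^*$ needs only the one-sided hypothesis.
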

\begin{proof}
    Let $\tilde{b}(u) = u^*\E_{\mathcal{P}}(u)$. By arguing as in \cite[Theorem~3.15]{chifan2020rigidity}, we see that $\tilde{b}(u) \in \mathcal{P}' \cap \mathcal{Q}$. Observe that
\[
  \E_{\mathcal{P}}(u)^*\E_{\mathcal{P}}(u)
  = \E_{\mathcal{P}}(u)^*\,u\,u^*\,\E_{\mathcal{P}}(u)
  = \bigl(u^*\E_{\mathcal{P}}(u)\bigr)^*\bigl(u^*\E_{\mathcal{P}}(u)\bigr)
  = \tilde{b}(u)^*\tilde{b}(u).
\]
Because $\tilde{b}(u) \in \mathcal{P}' \cap \mathcal{Q}$, the product $\tilde{b}(u)^*\tilde{b}(u)$ also lies in $\mathcal{P}' \cap \mathcal{Q}$. However, $\E_{\mathcal{P}}(u)^*\E_{\mathcal{P}}(u) \in \mathcal{P}$. Consequently,
\[
  \E_{\mathcal{P}}(u)^*\E_{\mathcal{P}}(u) \in \mathcal{P} \cap (\mathcal{P}' \cap \mathcal{Q}) = \mathcal{Z}(\mathcal{P}) = \bC.
\]
Since it is a non-zero positive element, $\E_{\mathcal{P}}(u)^*\E_{\mathcal{P}}(u) = c_u$ for some constant $c_u > 0$ (observe that $c_u=c_u^*$). Moreover, since $\cM$ is tracial, applying $\tau$ on both sides, we obtain that 
$$c_u=\tau(c_u)=\tau\left(\mathbb{E}_{\mathcal{P}}(u)^*\mathbb{E}_{\mathcal{P}}(u)\right)=\tau\left(\mathbb{E}_{\mathcal{P}}(u)\mathbb{E}_{\mathcal{P}}(u)^*\right)=\tau(c_{u^*})=c_{u^*}$$
We now isolate $u$ by substituting $u = \E_{\mathcal{P}}(u)\tilde{b}(u)^*$ to obtain
\[
  c_u\, u = \E_{\mathcal{P}}(u)\E_{\mathcal{P}}(u)^* u = \E_{\mathcal{P}}(u)\tilde{b}(u)^*.
\]
Dividing by $c_u$, we obtain the factorisation $u = a(u)b(u)$, where
\[
  a(u) = \frac{\E_{\mathcal{P}}(u)}{\sqrt{c_u}}, \qquad b(u) = \frac{\tilde{b}(u)^*}{\sqrt{c_u}}.
\]
By construction, $a(u) = w \in \mathcal{U}(\mathcal{P})$. Furthermore, since $u$ and $a(u)$ are unitaries, $b(u) = a(u)^*u$ must be unitary, and it lies in $\mathcal{P}' \cap \mathcal{Q}$ since $\tilde{b}(u) \in \mathcal{P}' \cap \mathcal{Q}$.
\end{proof}

\subsection{Proof of Theorem~\ref{thm:latticeISR}}
We are now prepared to complete the proof of Theorem~\ref{thm:latticeISR}. The argument relies on checking the behavior of conditional expectations across normal subgroups, followed by a direct application of the Character Decomposition Property.

\begin{proof}[Proof of Theorem~\ref{thm:latticeISR}]
Let $\Gamma = \mathrm{SL}_d(\bZ)$ where $d \geq 3$ odd, and let
$N \trianglelefteq \Gamma$ be a non-trivial normal subgroup. Since $\Gamma$ is
an irreducible lattice in a higher-rank semisimple Lie group with trivial center
and no compact factors, the Margulis Normal Subgroup Theorem
(Theorem~\ref{thm:margulis}) implies that $N$ has finite index in $\Gamma$. We
henceforth work with this finite-index normal subgroup $N$. We are given an
$N$-invariant von Neumann subalgebra $\cM \leq L(\Gamma)$. It follows from
Proposition~\ref{prop:amenable_subalgebra} that $\mathcal{Z}(\cM) = \bC$.
\textit{Claim-1:} If $\E_{\cM}(\lambda(n)) = 0$ for all $n \in N
\setminus \{e\}$, then $\cM = \bC$.

We show that $N \setminus \{e\}$ is $(\cM, N)$-thick, so that Lemma~\ref{lem:GNS_vanishing} applies directly. Fix
$g \in \Gamma \setminus \{e\}$. Let $C_N(g) = C_\Gamma(g) \cap N$.
Since $\Gamma = \mathrm{SL}_d(\bZ)$ is an ICC group, $C_\Gamma(g)$
has infinite index in $\Gamma$. By the Margulis NST
(Theorem~\ref{thm:margulis}), $[\Gamma : N] < \infty$, forcing
$[N : C_N(g)] = \infty$. Choose an infinite sequence
$\{n_i\}_{i \geq 1} \subset N$ from distinct left cosets of $C_N(g)$.
For $i \neq j$, the element
\[
  w_{i,j} = n_j^{-1}\,g\,n_j \cdot n_i^{-1}\,g^{-1}\,n_i \in
  N \setminus \{e\},
\]
since $n_i$ and $n_j$ lie in distinct cosets of $C_N(g)$. Hence $N
\setminus \{e\}$ is $(\cM, N)$-thick. Since $\mathbb{E}_\cM(\lambda(n))
= 0$ for all $n \in N \setminus \{e\}$ by hypothesis, Lemma~\ref{lem:GNS_vanishing} gives $\cM = \bC$, finishing the proof of \textit{Claim-1}.

Now, by an abuse of notation, let us denote the relative commutant $\cM'\cap L(\Gamma)$ by just $\cM'$. Observe that $\cM'$ is also $N$-invariant and hence, $$\phi_{\cM'}(g) = \tau_0(\mathbb{E}_{\cM'}(\lambda(g))\lambda(g)^*)=\|\E_{\cM'}(\lambda(g))\|_2^2$$ is also an $N$-invariant positive definite function on the group $\Gamma$.\\  
\textit{Claim-2:} $\phi_{\cM}(g)\phi_{\cM'}(g)=\delta_{e,g}$ for $g\in N$.\\
Pick $g \in N \setminus \{e\}$. If $\E_{\cM}(\lambda(g)) = 0$, the product $\phi_{\cM}(g)\phi_{\cM'}(g) = 0$ just by definition. Now, suppose that $\E_{\cM}(\lambda(g)) \ne 0$. Applying Lemma~\ref{lem:factorisation} to the inclusion $\cM \subseteq L(\Gamma)$, we can write $\lambda(g) = a(g)\,b(g)$ with
$a(g) \in \mathcal{U}(\cM)$ (unitary in $\cM$) and $b(g) \in
\mathcal{U}(\cM' \cap L(\Gamma))$ (unitary in $\cM'$).
We observe that $\E_{\cM'}(a(g))= \tau(a(g))$. Indeed, take any $x \in \cM'$. Since $a(g) \in \cM$ and $x \in \cM'$, we have
\[
  \E_{\cM'}(a(g))\,x
  = \E_{\cM'}(a(g)\,x)
  = \E_{\cM'}(x\,a(g))
  = x\,\E_{\cM'}(a(g)).
\]
Thus, $\E_{\cM'}(a(g))$ commutes with every $x \in \cM'$, so that using Proposition~\ref{prop:amenable_subalgebra}, we get that
$$\E_{\cM'}(a(g)) \in (\cM'\cap L(\Gamma))' \cap (\cM'\cap L(\Gamma))=\mathcal{Z}(\cM'\cap L(\Gamma))=\mathbb{C}.$$
Hence
$\E_{\cM'}(a(g)) = \tau(\E_{\cM'}(a(g))) = \tau(a(g)) $. A similar argument shows that
$\E_\cM(b(g)) = \tau(b(g))$.

Let us now compute $\phi_\cM(g)$. Since $a(g) \in \cM$ and $b(g) \in \cM'$, we see that
\[
  \E_\cM(\lambda(g))
  = \E_\cM(a(g)\,b(g))
  = a(g)\,\E_\cM(b(g))
  = a(g)\cdot\tau(b(g)).
\]
Since $a(g)$ and $b(g)$ are unitaries, we observe that
\begin{align*}
  \phi_\cM(g)
  &= \tau\bigl(\E_\cM(\lambda(g))\,\lambda(g)^{-1}\bigr)
  \\&= \tau\bigl(a(g)\,\tau(b(g))\cdot b(g)^*\,a(g)^*\bigr) \\
  &= \tau(b(g))\cdot\tau\bigl(a(g)\,b(g)^*\,a(g)^*\bigr)\\& = \tau(b(g))\cdot\tau\bigl(b(g)^*\,a(g)^*\,a(g)\bigr)
     \\& = \tau(b(g))\cdot\tau(b(g)^*)
     = \tau(b(g))\cdot\overline{\tau(b(g))} = |\tau(b(g))|^2.
\end{align*}
A similar computation shows that
$\phi_{\cM'}(g) = |\tau(a(g))|^2$. Now,
\begin{align*}
  \phi_\cM(g)\cdot\phi_{\cM'}(g)
  &= |\tau(b(g))|^2\cdot|\tau(a(g))|^2
     \\
  &= |\tau(b(g))\cdot\tau(a(g))|^2 \\
  &= |\tau\bigl(b(g)\cdot\tau(a(g))\bigr)|^2
     \\
  &= |\tau\bigl(\E_{\cM'}(a(g))\cdot b(g)\bigr)|^2
     \\
  &= |\tau\bigl(\E_{\cM'}(a(g)\,b(g))\bigr)|^2
     && (b(g)\in\cM') \\
  &= |\tau\bigl(\E_{\cM'}(\lambda(g))\bigr)|^2
     && (\lambda(g)=a(g)b(g)) \\
  &= |\tau(\lambda(g))|^2
     && (\tau\circ\E_{\cM'}=\tau) \\
  &= 0.
\end{align*}
Assume now that $\cM\ne\mathbb{C}$. It follows from \textit{Claim-1} that there exists $g\in N\setminus\{e\}$ such that $\mathbb{E}_\cM(\lambda(g))\ne 0$ which is equivalent to saying that $\phi_{\cM}(g)\ne 0$. Since $N$ inherits the CDP from $\Gamma$
by~\cite[Proposition~3.2(1)]{dudko2024character}, and $\phi_{\cM}|_N$,
$\phi_{\cM'}|_N$ are characters on $N$ satisfying
$\phi_{\cM}(g)\phi_{\cM'}(g) = 0$ for all $g \in N \setminus \{e\}$ by
\textit{Claim~2}, the CDP gives either $\phi_{\cM}|_N \equiv \delta_e$ or
$\phi_{\cM'}|_N \equiv \delta_e$.
Since $\E_{\cM}(\lambda(g)) \neq 0$, $\phi_{\cM} \not\equiv \delta_e$. Therefore, we must have $\phi_{\cM'} \equiv \delta_e$, meaning $\E_{\cM' \cap L(\Gamma)}(\lambda(g)) = 0$ for all $g \in N \setminus \{e\}$. Applying \textit{Claim-1} to the $N$-invariant subalgebra $\mathcal{P} = \cM' \cap L(\Gamma)$, we obtain $\cM' \cap L(\Gamma) = \bC$.

Let $K = \{g \in N \mid \E_\cM(\lambda(g)) \neq 0\}$. For any $g \in K$, we factored $\lambda(g) = a(g)b(g)$ with $a(g) \in \mathcal{U}(\cM)$ and $b(g) \in \mathcal{U}(\cM' \cap L(\Gamma))$. Since we established that $\cM' \cap L(\Gamma) = \bC$, $b(g)$ must be a scalar of modulus $1$. Consequently, $\lambda(g) = a(g)b(g) \in \cM$. This proves that $K = \{g \in N \mid \lambda(g) \in \cM\}$, which is clearly a subgroup of $N$. Moreover, since $\cM$ is $N$-invariant, for any $n \in N$ and $k \in K$, we have $\lambda(nkn^{-1}) = \lambda(n)\lambda(k)\lambda(n)^* \in \cM$, meaning $nkn^{-1} \in K$. Hence, $K$ is a normal subgroup of $N$. Since $\phi_\cM \not\equiv \delta_e$, $K$ is non-trivial. Moreover, $L(K)\subset\cM$.

Because $N$ is itself an irreducible lattice in a higher-rank semisimple Lie group, the Margulis Normal Subgroup Theorem implies that its non-trivial normal subgroup $K$ must have finite index in $N$. Since $N$ has finite index in $\Gamma$, $K$ has finite index in $\Gamma$. Consequently, its normal core $K_0 = \operatorname{Core}_\Gamma(K) = \bigcap_{\gamma \in \Gamma} \gamma K \gamma^{-1}$ is a finite intersection of finite-index subgroups, which implies $K_0$ is a finite-index normal subgroup of $\Gamma$. In particular, $K_0$ is non-trivial. 

Hence, $K_0$ is Plump in $\Gamma$ by \cite[Corollary~3.4]{amrutam2021intermediate}. Consequently, we have $L(K_0)\subseteq \cM\subseteq L(\Gamma)$ with $L(K_0)'\cap L(\Gamma)=\mathbb{C}$. We can now appeal to \cite[Proposition~4.4]{bedos2023c} to obtain that $\mathcal{M}=L(H)$ for some subgroup $H\le \Gamma$.
\end{proof}
The relative $C^*$-ISR property need not necessarily extend to their direct products as the following example demonstrates.
\begin{example}\label{ex:product_counterexample}
Let $\Gamma = \mathbb{F}_n \times \mathbb{F}_m$ (for $n, m \geq 2$). Let
$N = \mathbb{F}_n \times \{e\} \trianglelefteq \Gamma$. Let
$a_1, a_2, \ldots, a_m$ denote the standard free generators of $\mathbb{F}_m$.
Define an automorphism $\sigma \in \operatorname{Aut}(\mathbb{F}_m)$ by cyclic
permutation of the generators: $\sigma(a_i) = a_{i+1 \pmod{m}}$. This induces
a canonical trace-preserving automorphism on $C_r^*(\mathbb{F}_m)$. Let
$\mathcal{B} = (C_r^*(\mathbb{F}_m))^\sigma$ denote the fixed-point
$C^*$-subalgebra, and set
\[
\mathcal{A} = C_r^*(\mathbb{F}_n) \otimes_{\min} \mathcal{B}.
\]
Clearly $\mathcal{A}$ is $N$-invariant. We claim that $\mathcal{A}$ is not of
the form $C_r^*(H)$ for any subgroup $H \leq \Gamma$.

First, $\sigma$ has no non-trivial fixed points in $\mathbb{F}_m$. Indeed,
suppose $w \in \mathbb{F}_m$ is a reduced word satisfying $\sigma(w) = w$.
Since $\sigma$ cyclically permutes every generator and its inverse, the
permuted word $\sigma(w)$ is again reduced and equals $w$ as reduced words.
Comparing letter by letter forces each letter of $w$ to be fixed by $\sigma$.
But no individual generator or its inverse is fixed by the cyclic permutation
$a_i \mapsto a_{i+1}$ (for $m \geq 2$), so $w = e$. Consequently, for any
$g_2 \in \mathbb{F}_m$, $\lambda(g_2) \in \mathcal{B}$ if and only if
$g_2 = e$.

Now suppose for contradiction that $\mathcal{A} = C_r^*(H)$ for some
$H \leq \Gamma$. Since $\lambda(g_2) \in C_r^*(H)$ implies $g_2 \in H$, and
no non-trivial $\lambda(g_2)$ (with $g_2 \in \mathbb{F}_m$) belongs to
$\mathcal{B}$, we conclude $H \leq \mathbb{F}_n \times \{e\}$, so that
$C_r^*(H) \subseteq C_r^*(\mathbb{F}_n) \otimes \mathbb{C}$. However,
$\mathcal{B} \supsetneq \mathbb{C}$, as the element
\[
b_0 := \frac{1}{m}\sum_{i=1}^{m} \lambda(a_i) \in \mathcal{B}
\]
is non-scalar (as its Fourier support is non-trivial), so
$\mathcal{A} = C_r^*(\mathbb{F}_n) \otimes_{\min} \mathcal{B}
\not\subseteq C_r^*(\mathbb{F}_n) \otimes \mathbb{C}$, a contradiction.
\end{example}
The above example contrasts with the
ISR property for this class of groups~\cite{amrutam2023invariant,amrutam2025invariantC*}, and suggests that the relative version is
genuinely more sensitive to the global structure of the group. However, under appropriate conditions on the normal subgroups, it is plausible that $N$-invariant subalgebras do come from normal subgroups.
\bibliography{name}
\end{document}